\numberwithin{equation}{section}
\theoremstyle{plain}
\newtheorem{theorem}{Theorem}[section]
\newtheorem{lemma}[theorem]{Lemma}
\theoremstyle{remark}
\theoremstyle{definition}
\newcommand{\HH}{\mathcal{H}}
\newcommand{\R}{\mathbb{R}}
\newcommand{\Z}{\mathbb{Z}}
\newcommand{\N}{\mathbb{N}}
\newcommand{\roo}{\varrho}
\newcommand{\dd}{\,\mathrm{d}}
\renewcommand{\ge}{\geqslant}
\renewcommand{\le}{\leqslant}
\renewcommand{\geq}{\geqslant}
\renewcommand{\leq}{\leqslant}
\DeclareMathOperator{\dimm}{dim_M}
\DeclareMathOperator{\dimh}{dim_H}
\DeclareMathOperator{\dist}{dist}
\DeclareMathOperator{\diag}{diag}
\DeclareMathOperator{\spt}{spt}
\DeclareMathOperator{\id}{Id}
\renewcommand{\atop}[2]{\genfrac{}{}{0pt}{}{#1}{#2}}
\begin{document}

\title{Covering number on inhomogeneous graph-directed self-similar sets}

\author{Bal\'azs B\'ar\'any}
\address[Bal\'azs B\'ar\'any]
        {Department of Stochastics \\
        	Institute of Mathematics \\
        	Budapest University of Technology and Economics \\
        	M\H{u}egyetem rkp. 3 \\
        	H-1111 Budapest,
        	Hungary}
\email{balubsheep@gmail.com}

\author{Antti K\"aenm\"aki}
\address[Antti K\"aenm\"aki]
        {Alfr\'ed R\'enyi Institute of Mathematics \\
         Hungarian Academy of Sciences \\ 
         Budapest \\ 
         Hungary}
\email{kaenmaki@renyi.hu}

\author{Petteri Nissinen}
\address[Petteri Nissinen]
        {Department of Physics and Mathematics \\
         University of Eastern Finland \\
         P.O.\ Box 111 \\
         FI-80101 Joensuu \\
         Finland}
\email{pettern@student.uef.fi}

\subjclass[2000]{Primary 28A80; Secondary 37C45, 37D35.}
\keywords{Self-similar set, renewal theory, Minkowski dimension, covering number}
\date{\today}
\thanks{B. B\'ar\'any was supported by the grants NKFI FK134251, K142169, and the grant NKFI KKP144059 ``Fractal geometry and applications''.}

\begin{abstract}
  For a strongly connected inhomogeneous graph-directed self-similar set $K^C$ satisfying the strong open set condition, we characterize the asymptotic behaviour of the $r$-covering number $N_r(K^C)$ as $r \downarrow 0$ in terms of the Minkowski dimension $s_0(G)$ of the attractor. If $\int_0^\infty e^{-s_0(G)t}N_{e^{-t}}(C_i)\dd t<\infty$ for all vertices $i$, then $e^{-s_0(G)t}N_{e^{-t}}(K^C)$ has a limit as $t\to\infty$, which is a positive constant when the log-contraction group $G_M$ is $\R$ and a positive periodic function when $G_M$ is a lattice; if the integral diverges for some $i$, the limit is infinite. 
\end{abstract}

\maketitle

\section{Introduction}

Let $\Phi = (f_1,\ldots,f_\kappa)$ be a tuple of contractive similarities acting on $\R^d$ such that $f_i(x)=\roo_iO_ix+t_i$, where $0<\roo_i<1$ is a contraction ratio, $O_i$ is an orthogonal $d \times d$-matrix, and $t_i \in \R^d$ is a translation vector for all $i \in \{1,\ldots,\kappa\}$. A classical result of Hutchinson \cite{Hutchinson1981} shows that for each such $\Phi$ there exists a unique non-empty compact invariant set $K \subset \R^d$, called the \emph{self-similar set}, such that
\begin{equation} \label{eq:self-similar-set-def}
  K = \bigcup_{i=1}^\kappa f_i(K). 
\end{equation}
For a bounded set $A\subset\R^d$, let $N_r(A)$ be the \emph{$r$-covering number} of $A$, i.e.
\begin{equation*}
  N_r(A) = \min\biggl\{ k \in \N : A \subset \bigcup_{i=1}^k B(x_i,r) \text{ for some } x_1,\ldots,x_k \in \R^d \biggr\},
\end{equation*}
is the least number of closed balls of radius $r>0$ needed to cover $A$. Recall that, by Falconer \cite[Theorem 4]{Falconer1989}, the \emph{Minkowski dimension} of a self-similar $K$,
\begin{equation} \label{eq:minkowski-self-similar}
  \dimm(K) = \lim_{r \downarrow 0} \frac{\log N_r(K)}{\log r^{-1}},
\end{equation}
exists. A self-similar set $K$ satisfies the \emph{strong separation condition}, if $f_i(K) \cap f_j(K) = \emptyset$ for all $i,j \in \{1,\ldots,\kappa\}$ with $i \ne j$. Under the strong separation condition, Lalley \cite[Theorem 1]{Lalley1988} managed to give more precise information in \eqref{eq:minkowski-self-similar} by studying the convergence of
\begin{equation} \label{eq:covering-number-convergence}
  \frac{N_r(K)}{r^{-\dimm(K)}}
\end{equation}
as $r \downarrow 0$. In this paper, our goal is to generalize Lalley's result for more general systems under a more relaxed separation condition. Existing results either assume strong separation \cite{Lalley1988} or finite ramification \cite{HamblyNyberg2003}, and do not cover inhomogeneous graph-directed systems; the convergence of \eqref{eq:covering-number-convergence} in that setting has remained open. Our main results, Theorems \ref{thm:GIFS-renewal-result} and \ref{thm:GIFS-renewal-result2} below, completely characterize the convergence of \eqref{eq:covering-number-convergence} for strongly connected inhomogeneous graph-directed self-similar sets satisfying the strong open set condition.

Graph-directed self-similar sets generalize self-similar sets. They are defined by contractive similarities determined by a directed multigraph between non-empty compacts sets in $\R^d$. Such a configuration is called a Mauldin-Williams graph. More precisely, let $(V,E)$ be a directed multigraph with a set $V$ of vertices and with a multiset $E$ of directed edges. For every vertex $i\in V$, there exists a bounded set $X_i\subset\R^d$ such that $\overline{X_i^o}=X_i$, and for every edge $e\in E$, let $S_{e}\colon X_{t(e)}\to X_{i(e)}$ be a contractive similarity associated to the edge $e$ from the initial vertex $i(e)$ to the terminal vertex $t(e)$. The Mauldin-Williams graph is then $G=((V, E), (X_i)_{i\in V}, (S_e)_{e\in E})$. By Mauldin and Williams \cite[Theorem~1]{MR961615}, there exists a unique list $(K_i)_{i\in V}$ of non-empty compact subsets of $\R^d$, called the \emph{graph-directed self-similar sets}, such that
\begin{equation} \label{eq:gd-self-similar-set-def}
  K_i = \bigcup_{e\in E\,:\,i(e)=i} S_{e}(K_{t(e)})
\end{equation}
for all $i\in V$. The precise definition will be given in \S \ref{sec:GDIFS}. Note that in the case of one vertex and $\kappa$ many edges, the graph-directed self-similar set satisfies \eqref{eq:self-similar-set-def}. Write $K = \bigcup_{i\in V} K_i$. Hambly and Nyberg \cite{HamblyNyberg2003} studied the asymptotic behaviour of \eqref{eq:covering-number-convergence} for graph-directed self-similar sets under the strong open set condition and the so-called finite ramification.

Inhomogeneous graph-directed self-similar sets $(K_i^C)_{i\in V}$ are defined as graph-directed self-similar sets but with a list $C=(C_i)_{i\in V}$ of compact condensation sets. By Dubey and Verma \cite[Theorem~3.6]{dubey2023fractal}, there exists a unique list $(K_i^C)_{i \in V}$ of non-empty compact subsets of $\R^d$, called the \emph{inhomogeneous graph-directed self-similar sets}, such that
\begin{equation*}
	K_i^C = \bigcup_{e\in E\,:\,i(e)=i} S_{e}(K_{t(e)}^C)\cup C_{i}
\end{equation*}
for all $i \in V$. See \S \ref{sec:inhomog-gd} for a precise definition. Write also $K^C = \bigcup_{i\in V} K_i^C$. Note that if $C_i=\emptyset$ for all $i\in V$, then the inhomogeneous graph-directed self-similar set satisfies \eqref{eq:gd-self-similar-set-def}. Dubey and Verma \cite{dubey2023fractal} studied the Minkowski dimension of inhomogeneous graph-directed self-similar sets satisfying the strong open set condition.

Let us next state our main results. For the definitions of the strongly connected Mauldin-Williams graph, the strong open set condition (SOSC), and the strong condensation open set condition (SCOSC), the reader is referred to \S \ref{sec:GDIFS} and \S \ref{sec:inhomog-gd}. Furthermore, let $s_0(G)$ be the Minkowski dimension of the corresponding graph-directed self-similar sets defined in \eqref{eq:s0-def} and let $G_M$ be the smallest closed group generated by the logarithms of the contracting ratios defined in \eqref{eq:GMgraphdir}. The first main result covers the case where the condensation sets are small.

\begin{theorem} \label{thm:GIFS-renewal-result}
  Suppose that $(G,C)$ is a strongly connected inhomogeneous Mauldin-Williams graph satisfying the SOSC such that
  \begin{equation} \label{eq:condensation-finite}
    \int_0^\infty e^{-s_0(G)t}N_{e^{-t}}(C_i)\dd t<\infty
  \end{equation}
  for all $i \in V$. Then precisely one of the following two statements hold:
  \begin{enumerate}
    \item $G_M={\R}$ and for every $i\in V$ there exists a constant $h_i > 0$ such that
      \begin{equation*}
        \lim_{t\to \infty}\frac{N_{e^{-t}}(K_i^C)}{h_ie^{s_0(G)t}}=1.
      \end{equation*}
      In particular, $\lim\limits_{t\to \infty}e^{-s_0(G)t}N_{e^{-t}}(K^C) = \sum\limits_{i\in V} h_i$.
    \item $G_M=\langle \{\tau\}\rangle$ and for every $i\in V$ there exist $\delta>0$ and a $\tau$-periodic function $h_i \colon {\R}\to [\delta, \infty)$ such that
    \begin{equation*}
      \lim_{n\to \infty}\frac{N_{e^{-(n\tau+y)}}(K_i^C)}{h_i(y)e^{s_0(G)(n\tau+y)}}=1
    \end{equation*}
    for all $y\in [0, \tau)$. In particular, $\lim\limits_{n\to \infty}e^{-s_0(G)(n\tau+y)}N_{e^{-(n\tau+y)}}(K^C) = \sum\limits_{i\in V} h_i(y)$.
  \end{enumerate}
\end{theorem}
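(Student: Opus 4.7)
The plan is to recast the covering number into a vector-valued renewal equation in the variable $t = -\log r$ and invoke a graph-directed (Markov) renewal theorem in the spirit of Lalley and Hambly--Nyberg. Set $T_i(t) = N_{e^{-t}}(K_i^C)$ and $\beta_e = -\log\roo_e > 0$. Using the invariance $K_i^C = C_i \cup \bigcup_{e\colon i(e)=i} S_e(K_{t(e)}^C)$ together with SOSC and SCOSC, I would first derive an approximate identity of the form
\[
  T_i(t) = N_{e^{-t}}(C_i) + \sum_{e\colon i(e)=i} T_{t(e)}(t - \beta_e) + E_i(t),
\]
where $E_i(t)$ is an $O(1)$ error accounting for balls that straddle two cylinders and for the geometric interaction between $C_i$ and the images $S_e(K_{t(e)}^C)$. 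Multiplying through by $e^{-s_0(G)t}$ and writing $\widetilde T_i(t) = e^{-s_0 t} T_i(t)$, this becomes the graph renewal equation
\[
  \widetilde T_i(t) = \sum_{e\colon i(e)=i} e^{-s_0 \beta_e} \widetilde T_{t(e)}(t - \beta_e) + G_i(t),
\]
where $G_i(t) = e^{-s_0 t}(N_{e^{-t}}(C_i) + E_i(t))$. By the Mauldin--Williams characterisation of $s_0(G)$, the weighted transition matrix $A_{ij} = \sum_{e\colon i(e)=i,\,t(e)=j} e^{-s_0\beta_e}$ has Perron eigenvalue one, and strong connectivity makes it irreducible, so the Perron--Frobenius framework produces unique positive left/right eigenvectors.

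The second step is to verify that the forcing term $G_i$ is directly Riemann integrable on $[0,\infty)$. Tail integrability for the $e^{-s_0 t} N_{e^{-t}}(C_i)$ part is precisely the hypothesis \eqref{eq:condensation-finite}, while the $E_i$ part decays exponentially; the oscillation bound on compact intervals follows from monotonicity of $r \mapsto N_r(C_i)$ together with boundedness of $E_i$. Once direct Riemann integrability is in hand, the graph-directed renewal theorem yields the dichotomy according to $G_M$. In the non-lattice case $G_M = \R$, $\widetilde T_i(t)$ converges to a positive constant $h_i$ expressible via the Perron eigenvectors, the mean step $\sum_{i,j}\pi_i A_{ij}\beta_{ij}$, and the integrals of $G_j$. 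In the lattice case $G_M = \langle \tau\rangle$, convergence occurs only along arithmetic progressions, producing a $\tau$-periodic limit $h_i(y)$. Strict positivity $h_i(y) \geq \delta > 0$ is obtained by propagating nontriviality of $G_j$ at some vertex $j$ (guaranteed because each $C_i$ gives a nonzero contribution at some scale) through the irreducible matrix $A$ via strong connectivity. The $K^C$-versions follow from $K^C = \bigcup_i K_i^C$ modulo overlap errors that are negligible under SOSC after normalisation.

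The main obstacle I expect is the first step: deriving the renewal equation with an error $E_i(t)$ small enough, after multiplication by $e^{-s_0 t}$, to be absorbed into a directly Riemann integrable forcing function without destroying the monotonicity/oscillation estimates. Under Lalley's original strong separation condition the cylinders $S_e(K_{t(e)})$ are at a uniform positive distance from each other and from $C_i$ would be similarly separated, so only $O(1)$ patching balls are needed; under the weaker SOSC one has to exploit the ``strong'' part of the condition (the open set meeting each $K_i$) to produce matching two-sided bounds whose discrepancy vanishes fast enough after normalisation. This combinatorial/geometric bookkeeping of boundary balls, and the analogous accounting of the interaction between $C_i$ and the first generation of cylinder pieces, is the technical heart of the argument.
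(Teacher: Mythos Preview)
Your overall architecture matches the paper's: set up a vector-valued renewal equation for $e^{-s_0 t}N_{e^{-t}}(K_i^C)$, verify direct Riemann integrability of the forcing term, and invoke the Markov renewal theorem (the paper uses Lau--Wang--Chu, Theorem~\ref{LWC}). Two points deserve correction.

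First, the heart of the matter is indeed your ``main obstacle'', but your description of how to resolve it is too vague and your claim that $E_i(t)$ is $O(1)$ is false under SOSC alone. The paper's $L_i^*(t)=\sum_{e\in E_i}N_{e^{-t}}(S_e(K_{t(e)}^C))-N_{e^{-t}}(K_i^C)$ is bounded above by a sum of boundary covering numbers $N_{e^{-t}/2}\bigl(K_{t(e)}^C\cap[\partial U_{t(e)}]_{r_e^{-1}e^{-t}/2}\bigr)$ and below by $-N_{e^{-t}}(C_i)$; the nontrivial part is showing $\int_0^\infty e^{-s_0 t}N_{e^{-t}}(K_i^C\cap[\partial U_i]_{e^{-t}})\,dt<\infty$. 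The mechanism is concrete: use the \emph{strong} part of SOSC to find, for each $i$, a cylinder $S_{\gamma_i}(\overline{U_{t(\gamma_i)}})$ at positive distance from $\partial U_i$; then any point of $K_i^C$ within $e^{-t}$ of $\partial U_i$ lies in a cylinder whose address avoids all the $\gamma_j$. The subsystem obtained by deleting those words has Perron eigenvalue strictly below $1$ at exponent $s_0$, hence dimension $q<s_0$, which gives the exponential gain $e^{(q-s_0)t}$ needed for integrability (this is the paper's Lemma~\ref{estimatelemma}). Without this reduced-subsystem idea the argument does not close; ``matching two-sided bounds'' is not enough of a plan. (Incidentally, SCOSC is not assumed here; only SOSC.)

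Second, your positivity argument fails: you derive $h_i>0$ from ``each $C_i$ gives a nonzero contribution'', but the theorem includes the homogeneous case $C_i=\emptyset$ for all $i$, where your forcing term $G_i$ can have integral of indeterminate sign. The paper instead uses $\liminf_{t\to\infty} e^{-s_0 t}N_{e^{-t}}(K_i^C)\geq \mathcal{H}^{s_0}(K_i)>0$, the last inequality being a consequence of SOSC via Theorem~\ref{thm:MW-OSC-SOSC}.
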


The second main result deals with the case where there are large condensation sets.

\begin{theorem} \label{thm:GIFS-renewal-result2}
  Suppose that $(G,C)$ is a strongly connected inhomogeneous Mauldin-Williams graph satisfying the SCOSC such that
  \begin{equation} \label{eq:condensation-infty}
    \int_0^\infty e^{-s_0(G)t}N_{e^{-t}}(C_i)\dd t=\infty
  \end{equation}
  for some $i \in V$. Then
  \begin{equation*}
    \lim_{t\to \infty}\frac{N_{e^{-t}}(K_i^C)}{e^{s_0(G)t}}=\infty
  \end{equation*}
  for all $i\in V$.
\end{theorem}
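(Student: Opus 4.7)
The strategy is to exploit the SCOSC to turn the defining relation $K_i^C = \bigcup_{e\colon i(e)=i} S_e(K_{t(e)}^C)\cup C_i$ into a lower bound on the covering number that is essentially additive over the first-generation pieces, iterate this bound along finite paths in the Mauldin-Williams graph, and then apply the same Perron-Frobenius / renewal machinery that underlies Theorem \ref{thm:GIFS-renewal-result} to conclude that the divergence of the integral in \eqref{eq:condensation-infty} forces the normalized covering number $e^{-s_0(G)t}N_{e^{-t}}(K_i^C)$ to be unbounded.

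For the first step I would use the SCOSC to argue that, at every sufficiently small scale $r$, the pieces $S_e(K_{t(e)}^C)$ and $C_i$ are pairwise separated by a positive distance, so that
\begin{equation*}
  N_r(K_i^C) \ge \sum_{e\colon i(e)=i} N_{r/\roo_e}(K_{t(e)}^C) + N_r(C_i).
\end{equation*}
Since $S_e$ is a similarity of ratio $\roo_e$, we have $N_r(S_e(A)) = N_{r/\roo_e}(A)$ for every bounded $A$, so iterating the inequality $n$ times along paths $\gamma$ of length $n$ in $G$ starting at $i$ yields
\begin{equation*}
  N_r(K_i^C) \ge \sum_{k=0}^{n-1}\sum_{\atop{\gamma\colon i(\gamma)=i}{|\gamma|=k}} N_{r/\roo_\gamma}(C_{t(\gamma)}).
\end{equation*}
Setting $r=e^{-t}$, multiplying by $e^{-s_0(G)t}$, writing $z_j(s) := e^{-s_0(G)s}N_{e^{-s}}(C_j)$, and letting $n\to\infty$ produces the path-sum lower bound
\begin{equation*}
  e^{-s_0(G)t}N_{e^{-t}}(K_i^C) \ge \sum_{\gamma\colon i(\gamma)=i} \roo_\gamma^{s_0(G)}\,z_{t(\gamma)}\bigl(t-\log\roo_\gamma^{-1}\bigr),
\end{equation*}
where we interpret $z_j(s) = 0$ for $s < 0$.

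In the second step I would fix $i_0\in V$ with $\int_0^\infty z_{i_0}(s)\,\dd s = \infty$ and consider, for each $i\in V$, the Markov renewal measure
\begin{equation*}
  \mu_{i,i_0} := \sum_{\gamma\colon i\to i_0} \roo_\gamma^{s_0(G)}\,\delta_{\log\roo_\gamma^{-1}}
\end{equation*}
on $[0,\infty)$. The edge matrix $M(s)_{ij} := \sum_{e\colon i(e)=i,\,t(e)=j}\roo_e^s$ is irreducible by strong connectivity and has Perron eigenvalue $1$ at $s=s_0(G)$ by definition of $s_0(G)$, so the Markov renewal theorem produces a constant $c_{i,i_0}>0$ with $\mu_{i,i_0}([\ell,\ell+h])\to c_{i,i_0}\,h$ as $\ell\to\infty$ for every fixed $h>0$ in the non-lattice case $G_M = \R$, and the analogous statement on $\tau\N$ when $G_M = \langle\tau\rangle$. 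Restricting the path-sum from Step 1 to paths $i\to i_0$ and combining with this uniform positivity of $\mu_{i,i_0}$ on large scales, the lower bound becomes essentially a convolution of $z_{i_0}$ with a non-degenerate renewal density; since $\int_0^\infty z_{i_0}(s)\,\dd s = \infty$, a Fubini argument then yields $e^{-s_0(G)t}N_{e^{-t}}(K_i^C)\to\infty$ as $t\to\infty$ for every $i\in V$.

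The main obstacles I expect are (i) extracting the quantitative form of SCOSC-disjointness needed for the initial additive lower bound and controlling the error that accumulates under iteration, and (ii) the Markov-renewal estimate on $\mu_{i,i_0}$ with its lattice/non-lattice dichotomy and uniformity in $i$. Both ingredients should already be available from the proof of Theorem \ref{thm:GIFS-renewal-result}, where the same renewal machinery is used to establish \emph{convergence} of the normalized covering number; here we are simply extracting the \emph{divergence} consequence of a renewal equation whose forcing term has infinite total mass.
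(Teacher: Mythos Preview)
Your Step~1 contains a genuine gap. The SCOSC does \emph{not} give a positive separation between the first-level cylinder sets $S_e(K_{t(e)}^C)$ for distinct $e\in E_i$: all that SCOSC adds to the SOSC is the requirement $C_i\subset\overline{U_i}\setminus\bigcup_{e\in E_i}S_e(U_{t(e)})$, and under SOSC the closures $S_e(\overline{U_{t(e)}})$ may share boundary points. Hence the additive lower bound
\[
  N_r(K_i^C)\ \ge\ \sum_{e\in E_i} N_{r/\roo_e}(K_{t(e)}^C) + N_r(C_i)
\]
is simply false in general, and the iteration you build on it collapses. This is precisely the overlap issue that, in the proof of Theorem~\ref{thm:GIFS-renewal-result}, produces a nonzero error term $L_i^*$ requiring the boundary estimate of Lemma~\ref{estimatelemma}; you cannot assume that error vanishes.

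The paper circumvents this by never comparing cylinder sets at all. It uses the orbit description $K_j^C\supset\bigcup_{\gamma\in\Gamma_j^*}S_\gamma(C_{t(\gamma)})$ and exploits SCOSC in the form: for $\gamma\ne\gamma'$ one has $\dist(S_\gamma(C_{t(\gamma)}),S_{\gamma'}(C_{t(\gamma')}))>\delta\,r_{\gamma\wedge\gamma'}$, because after stripping the common prefix one of the two pieces sits inside a first-level cylinder $S_e(\overline{U_{t(e)}})$ while the other (or the relevant condensation set) is kept outside the \emph{open} cylinder by SCOSC. This is the correct ``quantitative SCOSC disjointness'' you were looking for, and it separates only condensation images, not full attractors. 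To get at the single divergent $C_i$ from an arbitrary starting vertex $j$, the paper routes through a fixed-length connecting path $\alpha(t(\gamma),i)$ supplied by strong connectivity.

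Your Step~2 is more apparatus than needed. The paper does not invoke any renewal theorem here: it organises paths into disjoint Moran cut-sets $\mathcal{K}_\ell(j)=\{\gamma:r_\gamma\le\rho^\ell<r_{\gamma^-}\}$ and uses Perron--Frobenius directly (via the Markov measure) to get $\sum_{\gamma\in\mathcal{K}_\ell(j)}r_\gamma^{s_0}$ bounded below uniformly in $\ell$. The resulting lower bound is then a finite sum $\sum_{\ell=0}^{k}\rho^{(k-\ell)s_0}N_{\rho^{k-\ell}}(C_i)$, which diverges by the discrete form of \eqref{eq:condensation-infty}. Your renewal-measure argument would recover this, but the elementary Moran/Perron--Frobenius count is both shorter and sidesteps the lattice/non-lattice dichotomy entirely.
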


To summarise, our main results show that the condensation set either does not affect the asymptotic behaviour of the covering number or forces it to blow up. Furthermore, since our results are formulated for inhomogeneous graphs, they can also be applied to homogeneous graph-directed self-similar sets that are not strongly connected but satisfy the SOSC, in the following sense: the attractor in each connected component can be expressed as a strongly connected inhomogeneous self-similar set in which the condensation set either satisfies assumption \eqref{eq:condensation-infty} or $\dimm(C_i) < s_0(G)$ for all $i \in V$, and in particular assumption \eqref{eq:condensation-finite} holds. Regarding graph-directed self-similar sets satisfying the strong open set condition, Hambly and Nyberg \cite{HamblyNyberg2003} obtained a more refined description with precise growth rates which depend on height of the basic classes, under the finite ramification assumption that the overlap of two distinct first-level cylinders is finite. In the strongly connected case, we are able to remove this finite ramification hypothesis. In the non-strongly connected inhomogeneous case, however, the interaction between different components and the effect of the condensation set can produce additional growth phenomena, and it is not clear that one can obtain similarly sharp asymptotics as in \cite{HamblyNyberg2003} by our methods. A full treatment of this reducible case would require a separate analysis and lies beyond the scope of the present paper.

The remainder of the article is organized as follows: In \S \ref{sec:preli}, we introduce the graph-directed iterated function systems, define an inhomogeneous version of it, and recall the vector-valued renewal theorem. In \S \ref{sec:SC-inhomog}, we prove Theorems \ref{thm:GIFS-renewal-result} and \ref{thm:GIFS-renewal-result2}.

\section{Preliminaries} \label{sec:preli}

In this section, we recall the definition of a graph-directed iterated function system and present the vector-valued renewal theorem of Lau, Wang, and Chu \cite{MR1367690}.

\subsection{Graph-directed iterated function systems} \label{sec:GDIFS}
Let $(V, E)$ be a directed multigraph, where $V$ is the set of vertices and $E$ is the multiset of edges. For an edge $e\in E$, let us denote its initial vertex by $i(e)$ and by $t(e)$ its terminal vertex. If $i, t \in V$ are vertices then we denote the set of edges with initial vertex $i$ by $E_i=\{e\in E:i(e)=i\}$, the set of edges from $i$ to $t$ by $E_{it}=\{e\in E_i:t(e)=t\}$. A list $G=((V, E), (X_i)_{i\in V}, (S_e)_{e\in E})$ with the following three conditions:
\begin{enumerate}[label=(G\arabic*), ref=(G\arabic*)]
	\item\label{it:G1} $(V, E)$ is a directed multigraph,
	\item\label{it:G2} $(X_i)_{i\in V} \in (\R^d)^N$ is a list of non-empty compact subsets of $\R^d$ with $\overline{X_i^o}=X_i$,
	\item\label{it:G3} $S_e \colon X_{t(e)} \to X_{i(e)}$ is a contractive similarity with contraction ratio $0<r_e<1$ for all $e\in E$,
\end{enumerate}
is called a \emph{Mauldin-Williams graph}.

If $G=((V, E), (X_i)_{i\in V}, (S_e)_{e\in E})$ is a Mauldin-Williams graph, then, by \cite[Theorem 1]{MR961615}, there exists a unique list $(K_i)_{i\in V}$ of non-empty compact invariant subsets of $X_i$ satisfying
\begin{equation} \label{invariantlist}
  K_i=\bigcup_{e\in E_i} S_e (K_{t(e)})
\end{equation}
for all $i\in V$. The sets in $(K_i)_{i\in V}$ are called \emph{graph-directed self-similar sets} associated with $G$. It is a common practice to embed the sets $X_i$ into a single $\R^d$ such that $X_i^o\cap X_j^o = \emptyset$ whenever $i\neq j$. This allows us to define $K=\bigcup_{i\in V} K_i$, where the union is ``essentially'' pairwise disjoint. We say that $G$ satisfies the \emph{open set condition (OSC)} if there exist a list $(U_i)_{i\in V}$ of sets such that for all $i\in V$ the following three assumptions holds: each $U_i$ is non-empty open bounded subsets of $X_i$,
\begin{equation} \label{OSC}
  \bigcup_{e\in E_i} S_e (U_{t(e)})\subset U_i,
\end{equation}
and
\begin{equation*}
  S_e(U_{t(e)})\cap S_{e'}(U_{t(e')})=\emptyset
\end{equation*}
for all $e, e'\in E_i$ with $e\neq e'$. Furthermore, if $U_i \cap K_i \neq \emptyset$ for all $i\in V$, then we say that $G$ satisfies the \emph{strong open set condition (SOSC)}. It follows from \eqref{invariantlist} and \eqref{OSC} that $K_i\subset \overline{U_i}$ for all $i\in V$. Clearly, if a Mauldin-Williams graph satisfies the SOSC with open sets $(U_i)_{i \in V}$, then, without loss of generality, we may assume that $X_i=\overline{U_i}$.

A list $\gamma=(e_1, \ldots, e_n)$ of consecutive edges, which satisfies $t(e_1)=i(e_2)$, $t(e_2)=i(e_3)$, \ldots, $t(e_{n-1})=i(e_n)$, is called a \emph{path}. For a path $\gamma=(e_1, \ldots, e_n)$, we define $S_\gamma = S_{e_1} \circ \cdots \circ S_{e_n}$ and $r_\gamma = r_{e_1}\cdots r_{e_n}$. The number of edges in a path is its \emph{length}. A path $\gamma=(e_1, \ldots, e_n)$ is called a \emph{cycle} if $t(e_n)=i(e_1)$, i.e.\ the terminal vertex of $\gamma$, denoted by $t(\gamma)$, is equal to the initial vertex of $\gamma$, denoted by $i(\gamma)$. A cycle $\gamma=(e_1, \ldots, e_n)$ is \emph{simple} if all the initial vertices $i(e_1), \ldots, i(e_n)$ are distinct. Let us denote the set of paths of length $n$ beginning at $i$ and terminating at $j$ by
$$
  \Gamma_{i,j}^n=\{\gamma=(e_1,\ldots,e_n)\in E^n:\gamma\text{ is a path such that }i(e_1)=i \text{ and } t(e_n)=j\}
$$
and write $\Gamma_{i,j}^* = \bigcup_{n \in \N} \Gamma_{i,j}^n$. We use the convention that $\varnothing$ is an element of $\Gamma_{i,j}^*$ and has the property that $S_\varnothing$ is the identity map $\id|_{X_i}$. Similarly, let $\Gamma_i^n=\bigcup_{j\in V}\Gamma_{i,j}^n$ be the set of $n$-length paths beginning at $i$ and write $\Gamma_{i}^* = \bigcup_{n \in \N} \Gamma_{i}^n$. If $\gamma = (e_1,\ldots,e_n) \in \Gamma_i^n$, then we write $\gamma^- = (e_1,\ldots,e_{n-1})$. Let $\Gamma^n = \bigcup_{i\in V} \Gamma_i^n$ and $\Gamma^* = \bigcup_{n \in \N} \Gamma^n$. Write $|\gamma|$ for the length of $\gamma \in \Gamma^*$. We say that $G$ is \emph{strongly connected} if for each pair of vertices $i$ and $j$, there is path from $i$ to $j$.  In particular, if $G$ is strongly connected then there exists $r>0$ such that for every $i,j\in V$, $\Gamma_{i,j}^r\neq\emptyset$. For any two paths $\gamma=(e_1,\ldots,e_n),\gamma'=(e_1',\ldots,e_n')\in\Gamma^*$ let $\gamma\wedge\gamma'=(e_1,\ldots,e_{|\gamma\wedge\gamma'|})$, where $|\gamma\wedge\gamma'|=\min\{k\geq0:e_{k+1}\neq e_{k+1}'\}$, the common part of the paths $\gamma,\gamma'$. We use the convention that if $|\gamma\wedge\gamma'|=0$ then $\gamma\wedge\gamma'=\varnothing$.

We also define the set of infinite length paths beginning at $i$ by
$$
  \Gamma_i=\{\gamma=(e_1,e_2,\ldots)\in E^\N:\gamma\text{ satisfies }i(e_1)=i \text{ and } t(e_n)=i(e_{n+1})\text{ for all }n\in\N\}.
$$
If $\gamma = (e_1,e_2,\ldots) \in E^\N$, then we write $\gamma|_n = (e_1,\ldots,e_n) \in E^n$ for all $n \in \N$. The \emph{canonical projection} $\Pi_i\colon\Gamma_i\to K_i$ is defined by the relation
$$
  \{\Pi_i(\gamma)\}=\bigcap_{n=1}^\infty S_{\gamma|_n}(K_{t(\gamma|_n)})
$$
for all $\gamma \in E^\N$. Note that each $\Pi_i$ is surjective. Defining $\Gamma=\bigcup_{i\in V}\Gamma_i$ and $\Pi\colon\Gamma\to K$ by $\Pi(\gamma)=\Pi_{i(\gamma)}(\gamma)$, we have $\Pi(\Gamma)=K$.

One can identify the finite set of vertices with positive integers, i.e. we may assume that $V=\{1,\ldots,N\}$. A non-negative $N \times N$ matrix $A$ is \emph{irreducible} if for all $i, j \in\{1, \ldots, N\}$ there exist $k\in {\N}$ such that $(A^k)_{ij}>0$. Here $A_{ij}$ denotes the $(i,j)$ element of a matrix $A$. For a given Mauldin-Williams graph $G$, define $A_{G}^{s}=( \sum_{e\in E_{ij}} r_e^s)_{i, j\in\{1, \ldots, N\}}$ for all $s \ge 0$. It is easy to see that $A_G^s$ is irreducible for all $s \ge 0$ if and only if $G$ is strongly connected. Recall that, by the Perron-Frobenius theorem, the \emph{spectral radius} $\rho(A)$ of an $N \times N$ matrix $A$ is the largest eigenvalue of $A$ in modulus. Let $s_0(G)\geq 0$ be the unique solution of
\begin{equation} \label{eq:s0-def}
  \rho(A_{G}^s)=1.
\end{equation}
The following theorem follows from \cite[Theorem 3]{MR961615} and \cite[Corollary 3.5]{MR1449135}.

\begin{theorem} \label{thm:MW-dimension}
  Let $G$ be a strongly connected Mauldin-Williams graph and $(K_i)_{i\in V}$ the associated list of graph-directed self-similar sets. If $G$ satisfies the OSC, then
  \begin{equation*}
  \dimm(K) = \dimh(K) = \dimh(K_i) = \dimm(K_i) = s_0(G)
  \end{equation*}
  and $0<\HH^{s_0(G)}(K_i)<\infty$ for all $i\in V$.
\end{theorem}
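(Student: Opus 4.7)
The plan is to combine the Hausdorff dimension result of Mauldin and Williams with a standard canonical covering estimate to upgrade it to the Minkowski dimension. The key underlying fact is the Perron-Frobenius structure of $A_G^{s_0(G)}$ encoded in \eqref{eq:s0-def}.

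\emph{Step 1: The Hausdorff part.} The equalities $\dimh(K_i)=s_0(G)$ together with $0<\HH^{s_0(G)}(K_i)<\infty$ for each $i\in V$ are exactly \cite[Theorem 3]{MR961615}; the strong connectivity is what makes $A_G^{s}$ irreducible, so the Perron-Frobenius eigenvalue is simple and the associated eigenvector has strictly positive entries, which is what the Mauldin-Williams argument needs. Since $K=\bigcup_{i\in V}K_i$ is a finite union, Hausdorff dimension is preserved: $\dimh(K)=\max_{i\in V}\dimh(K_i)=s_0(G)$.

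\emph{Step 2: Upper bound for Minkowski dimension.} For each $r>0$ and $i\in V$, I would introduce the stopping family $\Lambda_i^r=\{\gamma\in\Gamma_i^*:r_\gamma\le r<r_{\gamma^-}\}$. By iterating \eqref{invariantlist} we obtain the cover $K_i\subset\bigcup_{\gamma\in\Lambda_i^r}S_\gamma(X_{t(\gamma)})$, and every set in the cover has diameter at most $r\cdot\max_{j\in V}\diam(X_j)$. To bound $\#\Lambda_i^r$ one expands $(A_G^{s_0(G)})^n$ to the appropriate level and uses that the spectral radius equals $1$; as in \cite[Corollary 3.5]{MR1449135} this yields $\sum_{\gamma\in\Lambda_i^r}r_\gamma^{s_0(G)}\le C$ for a constant independent of $r$. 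Together with $r_\gamma\ge r\cdot\min_{e\in E}r_e$, this gives $\#\Lambda_i^r\le C'r^{-s_0(G)}$, hence $N_r(K_i)\le C''r^{-s_0(G)}$ and $\udimm(K_i)\le s_0(G)$.

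\emph{Step 3: Assembling.} The general inequality $\dimh(A)\le\ldimm(A)\le\udimm(A)$ combined with Steps 1 and 2 forces all four dimensions of $K_i$ to coincide with $s_0(G)$. Passing to $K=\bigcup_{i\in V}K_i$, both $\dimh$ and $\udimm$ are stable under finite unions, so the identity extends to $K$ as well.

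The only subtle point is Step 2, where one must turn the spectral information $\rho(A_G^{s_0(G)})=1$ into uniform-in-$r$ control on the total $s_0(G)$-mass carried by $\Lambda_i^r$. This is the content of the cited \cite[Corollary 3.5]{MR1449135}: irreducibility and the Perron-Frobenius theorem yield a left-eigenvector whose components allow one to compare the matrix iterates with $r^{s_0(G)}$ in a bounded ratio. Once this uniform bound is available, the covering-number estimate and the conclusion are immediate.
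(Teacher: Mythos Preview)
Your proposal is correct and matches the paper's approach exactly: the paper does not give a proof but simply states that the theorem ``follows from \cite[Theorem 3]{MR961615} and \cite[Corollary 3.5]{MR1449135}'', which are precisely the two ingredients you invoke in Steps~1 and~2. Your write-up is in fact a fuller elaboration of how those citations combine than what the paper itself provides.
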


We also recall the following theorem which is a combination of \cite[Theorem 2.2.6]{Wang1994} and \cite[Theorem 5]{MR961615}.

\begin{theorem} \label{thm:MW-OSC-SOSC}
  Let $G$ be a strongly connected Mauldin-Williams graph and $(K_i)_{i\in V}$ the associated list of graph-directed self-similar sets. Then
  \begin{equation*}
    OSC \quad\Leftrightarrow\quad SOSC \quad\Leftrightarrow\quad 0<\HH^{s_0(G)}(K_i)<\infty
  \end{equation*}
  for all $i \in V$.
\end{theorem}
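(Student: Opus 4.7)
The plan is to establish the two non-trivial equivalences via a cyclic chain of implications. The implication SOSC $\Rightarrow$ OSC is immediate from the definitions, and OSC $\Rightarrow$ $0<\HH^{s_0(G)}(K_i)<\infty$ for all $i\in V$ is the content of Theorem \ref{thm:MW-dimension}. The substantial step is therefore $0<\HH^{s_0(G)}(K_i)<\infty$ for every $i\in V \Rightarrow$ SOSC, which is the graph-directed analogue of Schief's theorem for ordinary self-similar sets.

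For this direction, my first step would be a density-point reduction. Let $\mu_i$ be the natural self-similar Borel measure supported on $K_i$, obtained by pushing forward through the canonical projection $\Pi_i$ the Markov measure on $\Gamma_i$ whose cylinder weights are $r_e^{s_0(G)}$ combined with the Perron--Frobenius eigenvector of $A_G^{s_0(G)}$. A standard covering-by-cylinders argument gives the comparison $\mu_i \lesssim \HH^{s_0(G)}|_{K_i}$, so the finiteness hypothesis yields $\mu_i(K_i)<\infty$, while the positivity hypothesis together with a density theorem of Federer type supplies points $x_i\in K_i$ with $\mu_i(B(x_i,r)) \asymp r^{s_0(G)}$ for arbitrarily small $r>0$. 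Coupled with the upper bound $\HH^{s_0(G)}(K_i)<\infty$, this lower density forces a uniform upper bound $M$ on the number of cylinders $S_\gamma(K_{t(\gamma)})$ of diameter comparable to $r$ that can intersect $B(x_i,r)$.

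The second step produces the open sets $U_j$ coherently over all vertices. Using strong connectivity, fix paths $\gamma_j\in\Gamma_{j,i}^*$ from each $j$ to a reference vertex $i$, and define $U_j$ as a suitably chosen open neighbourhood of $S_{\gamma_j}(x_i)\in K_j$ inside $X_j$. The nesting $\bigcup_{e\in E_j} S_e(U_{t(e)})\subset U_j$ and the pairwise disjointness $S_e(U_{t(e)})\cap S_{e'}(U_{t(e')})=\emptyset$ for distinct $e,e'\in E_j$ are both verified by contradiction: a failure would yield, at arbitrarily small scales, more than $M$ cylinders crowding into a single ball around some translate of $x_i$, violating the overlap bound from the first step. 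Since $S_{\gamma_j}(x_i)\in U_j\cap K_j$ by construction, the SOSC follows.

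The main obstacle I anticipate is calibrating the radii of these neighbourhoods across vertices so that the nesting and disjointness conditions are simultaneously compatible throughout the graph. Because the multigraph may contain arbitrarily long cycles, purely local choices at each vertex need not propagate into a globally coherent family $(U_i)_{i\in V}$; keeping the Perron--Frobenius weights and the comparability constants between $\mu_i$ and $\HH^{s_0(G)}|_{K_i}$ uniform in $i$ is what makes this calibration delicate, and is precisely the point where strong connectivity of $G$ enters in an essential way.
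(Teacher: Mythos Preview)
The paper does not give its own proof of this statement: it is quoted as a known result, attributed to \cite[Theorem~2.2.6]{Wang1994} together with \cite[Theorem~5]{MR961615}. So there is nothing to compare against beyond noting that your outline is essentially the graph-directed version of Schief's argument, which is indeed what those references carry out.

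That said, your second step as written has a real gap. Defining $U_j$ as ``a suitably chosen open neighbourhood of $S_{\gamma_j}(x_i)$'' --- that is, a single small set around one point --- cannot give the nesting condition $\bigcup_{e\in E_j} S_e(U_{t(e)})\subset U_j$: the images $S_e(U_{t(e)})$ are small sets centred near \emph{other} points of $K_j$, and there is no mechanism forcing them inside a single neighbourhood of $S_{\gamma_j}(x_i)$. In the Schief/Bandt--Graf argument the nesting is not verified by contradiction from the overlap bound; it is built into the definition of the open sets. One fixes a small ball $B$ around the density point and sets, roughly, $U_j=\bigcup_{\gamma\in\Gamma_{j,i}^*} S_\gamma(B)$ (or the analogous union truncated at an appropriate scale), so that $S_e(U_{t(e)})\subset U_j$ holds automatically by concatenation of paths. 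The overlap bound is then used solely for the disjointness of the first-level images, by maximising the overlap count over scales and arguing that a collision would strictly increase it. Your ``main obstacle'' paragraph hints that you sense something is off in the calibration, but the fix is structural --- take a union of cylinder images, not a single neighbourhood --- rather than a matter of tuning radii.
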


\subsection{Inhomogeneous graph-directed systems} \label{sec:inhomog-gd}
We introduce an extension of the Mauldin-Williams graph which is also our main interest. Let $G$ be a Mauldin-Williams graph defined in \S \ref{sec:GDIFS}. If there exists a list $C=(C_i)_{i \in V}$ of compact sets such that $C_i \subset X_i$ for all $i \in V$, then we see that there exists a unique list $(K_i^C)_{i\in V}$ of non-empty compact invariant subsets of $X_i$ satisfying
\begin{equation*} \label{invariantlist2}
  K_i^C=\bigcup_{e\in E_i} S_e (K_{t(e)}^C) \cup C_i
\end{equation*}
for all $i\in V$. Note that if $C_i = \emptyset$ for all $i \in V$ and we denote $(\emptyset)_{i \in V}$ by $\emptyset$, then $K_i^\emptyset$ is the set satisfying \eqref{invariantlist} and will be denoted by $K_i$. Recall that we introduced the convention that $\varnothing\in\Gamma_i^*$ and $S_\varnothing=\id|_{X_i}$, so it is straightforward to see by the definition that
\begin{equation} \label{eq:inhomogeneous-def2}
  K_i^C=K_i\cup \bigcup_{\gamma\in \Gamma_i^*} S_{\gamma}(C_{t(\gamma)}).
\end{equation}
We say that $(G, C)$ is an \emph{inhomogeneous Mauldin-Williams graph}. Calling $(G, C)$ \emph{strongly connected} obviously then means that $G$ is strongly connected. The sets in $(K_i^C)_{i\in V}$ are called \emph{inhomogenous graph directed self-similar sets} associated with $(G, C)$ and, again embedding the sets $X_i$ into a single $\R^d$ such that $X_i^o \cap X_j^o = \emptyset$ whenever $i\neq j$, their union is denoted by $K^C=\bigcup_{i\in V} K_i^C$.

We say that $(G, C)$ satisfies the \emph{strong open set condition (SOSC)} if $G$ satisfies the SOSC with open sets $(U_i)_{i \in V}$ and $C_i \subset \overline{U_i}$ for all $i \in V$. Observe that if $(G, C)$ satisfies the SOSC, then $K_i^C \subset \overline{U_i}$ and we may assume that $X_i = \overline{U_i}$ for all $i\in V$. Furthermore, following \cite{KaenmakiLehrback2017}, we say that $(G,C)$ satisfies the \emph{strong condensation open set condition (SCOSC)} if $G$ satisfies the SOSC with open sets $(U_i)_{i \in \{1,\ldots,N\}}$ and $C_i \subset \overline{U_i} \setminus \bigcup_{e \in E_i} S_e(U_{t(e)})$ for all $i \in \{1,\ldots,N\}$.

\subsection{Vector-valued renewal theorem} \label{sec:vector-renewal}
We say that a locally finite Borel regular measure is a \emph{Radon measure}. For Radon measures $\mu_1$ and $\mu_2$ on ${\R}$, we define the \emph{convolution} of $\mu_1$ and $\mu_2$ by
\begin{equation*}
(\mu_1*\mu_2)(A)=\iint \mathds{1}_A(x+y)\dd\mu_1(x)\dd\mu_2(y)
\end{equation*}
for all Borel sets $A \subset \R$. Observe that the convolution is a Radon measure on ${\R}$, and that we furthermore have
\begin{align*}
  \spt(\mu_1+\mu_2) &= \spt(\mu_1)\cup\spt(\mu_2),\\
  \spt(\mu_1*\mu_2) &= \spt(\mu_1)+\spt(\mu_2), 
\end{align*}
where $(\mu_1+\mu_2)(A) = \mu_1(A)+\mu_2(A)$ and $A+B=\{x+y : x\in A \text{ and } y\in B\}$ for all $A,B \subset \R$. Moreover, if $\spt(\mu_1), \spt(\mu_2) \subset [0, \infty)$, then clearly $\spt(\mu_1+\mu_2) = \spt(\mu_1)\cup\spt(\mu_2) \subset [0, \infty)$ and $\spt(\mu_1*\mu_2) = \spt(\mu_1)+\spt(\mu_2)\subset [0, \infty)$.

Let us define a \emph{matrix valued Radon measure} by setting
\begin{equation} \label{matrix valued measure}
  M =
  \begin{pmatrix}
    \mu_{11} & \cdots & \mu_{1N} \\
    \vdots & \ddots &\vdots \\
    \mu_{N1} & \cdots &\mu_{NN}
  \end{pmatrix},
\end{equation}
where each $\mu_{ij}$, $i,j \in \{1,\ldots, N\}$, is a Radon measure on ${\R}$ such that $\spt(\mu_{ij}) \subset [0, \infty)$.
If $\gamma=((i_1,i_2),(i_2,i_3), \ldots, (i_{k-1},i_k))$ is a path, then we write 
\begin{equation*}
  \mu_{\gamma}=\mu_{i_1i_2}*\mu_{i_2i_3}*\cdots*\mu_{i_{k-1}i_k}.
\end{equation*}
Note that $\spt(\mu_{i_1i_2}*\mu_{i_2i_3}*\cdots*\mu_{i_{k-1}i_k})\subset [0, \infty)$. If $M = (\mu_{ij})_{i,j \in \{1,\ldots,N\}}$ and $P = (\nu_{ij})_{i,j \in \{1,\ldots,N\}}$ are matrix valued Radon measures, then we define their convolution by
\begin{equation*}
  M*P =
  \begin{pmatrix}
    \sum_{l=1}^N \mu_{1l}*\nu_{l1} & \cdots & \sum_{l=1}^N \mu_{1l}*\nu_{lN} \\ \vdots & \ddots &\vdots \\  \sum_{l=1}^N \mu_{Nl}*\nu_{l1} & \cdots &\sum_{l=1}^N \mu_{Nl}*\nu_{lN}
  \end{pmatrix}.
\end{equation*}
Let $M^{*0} = \diag(\delta_0,\ldots,\delta_0)$ and define recursively $M^{*k} = M^{*(k-1)} * M$ for all $k \in \N$. Note that the measures $M^{*k}_{ij}$ are Radon on ${\R}$ such that $\spt(M^{*k}_{ij}) \subset [0, \infty)$. We also write
\begin{equation*}
  U=\sum_{k=0}^{\infty} M^{*k}=
  \begin{pmatrix}
    \sum_{k=0}^{\infty} M_{11}^{*k} & \cdots & \sum_{k=0}^{\infty} M_{1N}^{*k} \\ \vdots & \ddots  &\vdots \\  \sum_{k=0}^{\infty} M_{N1}^{*k} & \cdots & \sum_{k=0}^{\infty} M_{NN}^{*k}
  \end{pmatrix}
\end{equation*}
and
\begin{align}
\label{support set}
  G_M=\Bigl\langle\bigcup\{ \spt(\mu_{\gamma}): {\gamma} \text{ is a simple cycle}\}\Bigr\rangle.
\end{align}
Assuming that the set $G_M$ is non-empty and nontrivial (i.e.\ $G_M \neq \{0\}$), then there are two possibilities: either $G_M={\R}$ or $G_M=\langle \{\tau\} \rangle$ for some $\tau>0$. Indeed, if a non-empty $B$ is a nontrivial subgroup of the additive group of real numbers, then one of the following holds: $B$ is dense in ${\R}$ or $B=\tau\Z=\{\tau k : k\in {\Z}\}$ for some $\tau>0$. Thus closed non-empty nontrivial subgroups of the additive group of real numbers are ${\R}$ and $\tau{\Z}$.

Recall that
\begin{equation*}
  (f*\mu)(x)=\int f(x-y)\dd\mu(y)
\end{equation*}
for all Radon measures $\mu$ on $\R$ and $f \colon \R \to \R$. If $M = (\mu_{ij})_{i,j \in \{1,\ldots,N\}}$ is a matrix valued Radon measure and $f=(f_1,\ldots,f_N) \colon \R \to \R^N$, then we define
\begin{align*}
(f*M)(x) &= (f_1,\ldots, f_N)*
\begin{pmatrix}
    \mu_{11} & \cdots & \mu_{1N} \\
    \vdots &  \ddots &\vdots \\
    \mu_{N1} & \cdots &\mu_{NN}
  \end{pmatrix}(x)\\
  &= \biggl( \sum_{l=1}^N f_l*\mu_{l1}(x),\ldots,\sum_{l=1}^N f_l*\mu_{lN}(x) \biggr)
\end{align*}
Let $L = (L_1,\ldots,L_N) \colon {\R} \to {\R}^N$ be a function vanishing for $x<0$ and let us consider the inhomogeneous convolution equation of the form
\begin{equation} \label{reneq}
  f(x) = (f*M)(x) + L(x),
\end{equation}
where $M = (\mu_{ij})_{i,j \in \{1,\ldots,N\}}$ is a matrix valued Radon measure and $f \colon {\R} \to {\R}^N$. Furthermore, we assume that the component functions $L_1, \ldots, L_N \colon {\R} \to {\R}$ of $L$ are \emph{directly Riemann integrable}, that is, each $L_i$ is Riemann integrable on finite closed intervals and 
\begin{equation}\label{eq:dirRiemann}
  \sum_{k=0}^{\infty} \sup_{t\in [k, k+1]} |L_i(t)|<\infty.
\end{equation}
We say that $L$ is directly Riemann integrable if all of its component functions are. Finally, we denote the cumulative distribution of $M$ by
\begin{equation*}
  F_M(x) =
  \begin{pmatrix}
    F_{11}(x) & \cdots & F_{1N}(x) \\
    \vdots & \ddots  &\vdots \\
    F_{N1}(x) & \cdots & F_{NN}(x)
  \end{pmatrix}
	=
	\begin{pmatrix}
    \mu_{11}([0, x]) & \cdots & \mu_{1N}([0, x]) \\
    \vdots & \ddots  &\vdots \\
    \mu_{N1}([0, x]) & \cdots & \mu_{NN}([0, x])
  \end{pmatrix}
\end{equation*}
for all $x \ge 0$ and the matrix $E$ of first moments is
\begin{equation*}
  E =
  \begin{pmatrix}
    m_{11} & \cdots & m_{1N} \\ 
    \vdots & \ddots & \vdots \\ 
    m_{N1} & \cdots & m_{NN}
  \end{pmatrix}
  =
  \begin{pmatrix}
    \int_0^{\infty} x \dd\mu_{11}(x) & \cdots & \int_0^{\infty} x \dd\mu_{1N}(x) \\ 
    \vdots & \ddots & \vdots \\ 
    \int_0^{\infty} x \dd\mu_{N1}(x) & \cdots & \int_0^{\infty} x \dd\mu_{NN}(x)
  \end{pmatrix}.
\end{equation*}
The following result is \cite[Theorem 4.3]{MR1367690}.

\begin{theorem} \label{LWC}
  Suppose that $M$ is a matrix valued Radon measure as in \eqref{matrix valued measure} such that the full measure matrix $F_M(\infty)$ is irreducible and has spectral radius $1$. If $L \colon {\R} \to {\R}^N$ is directly Riemann integrable such that $L(x)=(0,\ldots,0)$ for all $x<0$, then the inhomogeneous convolution equation
  \begin{equation*}
    f = f*M + L,
  \end{equation*}
has a unique continuous solution $f=L*U$ vanishing on $(-\infty,0)$. Furthermore, the solution satisfies the following two conditions:
  \begin{enumerate}
    \item If $G_M={\R}$, then
    \begin{equation*}
      \lim_{x \to \infty} f(x)=\biggl(\int_0^{\infty} L_1(t) \dd t, \ldots, \int_0^{\infty} L_N(t) \dd t\biggr) A.
    \end{equation*}
    \item If $G_M=\langle \{\tau\} \rangle$ for some $\tau>0$, then for each $x>0$ it holds that
    \begin{equation*}
      \lim_{n \to \infty} f(x+n\tau) = \biggl(\sum_{k \in {\Z}} L_1(x+k\tau), \ldots, \sum_{k \in {\Z}} L_N(x+k\tau)\biggr)A,
    \end{equation*}
    where $A = (v^\top E u)^{-1}uv^\top$ and $u,v$ are the unique normalized right and left $1$-eigenvectors of $F_M(\infty)$ respectively.
  \end{enumerate}
\end{theorem}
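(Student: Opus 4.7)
The plan is to reduce the vector-valued convolution equation to a Markov renewal equation via a Perron-Frobenius change of variables, and then apply a Blackwell-type theorem for Markov renewal processes.

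First I would establish existence and uniqueness. Iterating $f = f*M + L$ produces
\begin{equation*}
f(x) = \sum_{k=0}^{n} (L*M^{*k})(x) + (f*M^{*(n+1)})(x).
\end{equation*}
The total mass $M^{*k}_{ij}([0,\infty)) = (F_M(\infty)^k)_{ij}$ is bounded uniformly in $k$ by Perron-Frobenius (since $F_M(\infty)$ is irreducible with spectral radius $1$), while the mass of $M^{*k}_{ij}$ on any fixed bounded interval decays as $k\to\infty$ because the supports drift to infinity at a linear rate. Hence the series $\sum_k L*M^{*k}$ converges locally uniformly to a continuous function $f = L*U$ vanishing on $(-\infty, 0)$, and the tail $f*M^{*(n+1)}$ vanishes, confirming that $L*U$ solves the equation. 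Uniqueness follows by applying the same iteration to the difference of any two continuous vanishing solutions.

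Next I would normalize via Perron-Frobenius. Let $u, v$ be the positive right and left $1$-eigenvectors of $F_M(\infty)$, normalized so that $v^\top u = 1$. Set $\tilde\mu_{ij} = (u_j/u_i)\mu_{ij}$, $\tilde f_j = u_j f_j$, and $\tilde L_j = u_j L_j$. A direct computation shows $\sum_j \tilde\mu_{ij}([0,\infty)) = u_i^{-1}(F_M(\infty)u)_i = 1$, so $\tilde M$ is the semi-Markov kernel of a Markov renewal process on $\{1,\ldots,N\}$, and the equation becomes $\tilde f = \tilde f*\tilde M + \tilde L$. The corresponding renewal measure $\tilde U_{lj} = \sum_k \tilde M^{*k}_{lj}$ is then the standard Markov renewal measure counting the expected number of visits to $j$ starting from $l$.

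Finally I would apply the Markov renewal theorem. The stationary distribution of the embedded chain is $\pi_j = v_j u_j$ and the stationary mean cycle time is
\begin{equation*}
m^* = \sum_{i,j}\pi_i \int t\,\dd\tilde\mu_{ij}(t) = \sum_{i,j} v_i u_j m_{ij} = v^\top E u.
\end{equation*}
In the non-lattice case $G_M = \R$, Blackwell's theorem for Markov renewal processes gives $\tilde U_{lj}([x,x+h]) \to \pi_j h/m^*$ as $x \to \infty$, uniformly in the starting state $l$. Combining this with direct Riemann integrability of $\tilde L$ yields
\begin{equation*}
\tilde f_j(x) = \sum_l\int \tilde L_l(x-t)\,\tilde U_{lj}(\dd t) \longrightarrow \frac{v_j u_j}{v^\top E u}\sum_l u_l \int_0^\infty L_l(t)\dd t,
\end{equation*}
and undoing the normalization $f_j = \tilde f_j/u_j$ gives the first stated limit with $A = (v^\top E u)^{-1}uv^\top$. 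The lattice case $G_M = \langle\{\tau\}\rangle$ proceeds identically with the Lebesgue integral replaced by $\sum_{k\in\Z}$ over the lattice. The main obstacle is establishing the Markov renewal theorem itself with the correct lattice dichotomy: one must verify that the closed subgroup $G_M$ generated by supports of simple-cycle convolutions coincides with the group generated by return-time distributions of the embedded chain (simple cycles suffice because every cycle decomposes into simple ones and supports add under convolution), and then prove Blackwell's theorem either by coupling two semi-Markov paths or by analyzing the matrix resolvent $(I - \widehat{M}(i\xi))^{-1}$ near $\xi = 0$ to extract a simple pole with residue $(v^\top E u)^{-1} uv^\top$.
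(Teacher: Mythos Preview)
The paper does not prove this theorem: it is quoted verbatim as \cite[Theorem~4.3]{MR1367690} (Lau, Wang, and Chu), with only a one-line remark afterwards explaining why the shift $a_{1j}$ appearing in their lattice statement may be absorbed into the sum over $k\in\Z$. There is therefore no in-paper argument to compare against; the authors treat Theorem~\ref{LWC} as a black box.

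Your outline is the standard route to such results and is essentially correct. A couple of points deserve tightening. First, the claim that the mass of $M^{*k}_{ij}$ on a fixed bounded interval tends to zero ``because the supports drift to infinity at a linear rate'' tacitly assumes that not all of the $\mu_{ij}$ are point masses at $0$; this is guaranteed here only because $G_M$ is assumed nontrivial, so you should make that dependence explicit (and in fact one usually proves local finiteness of $U$ directly rather than via a drift statement). Second, in the uniqueness step the iteration gives $g = g*M^{*(n+1)}$ for the difference $g$ of two solutions, but to conclude the right-hand side vanishes you need an a priori local bound on $g$; continuity alone is not enough without also invoking that $g$ vanishes on $(-\infty,0)$ and that $U$ is locally finite. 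Finally, your identification of the lattice group is right in spirit, but the equivalence between ``group generated by supports of simple-cycle convolutions'' and ``group generated by return times of the embedded Markov chain'' is exactly the delicate point in the Markov renewal theorem, and your parenthetical sketch (decompose cycles into simple cycles, supports add under convolution) is the right idea but would need to be carried out carefully to match the precise definition \eqref{support set}.
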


We remark that, in the case $G_M=\langle \{\tau\} \rangle$, the result \cite[Theorem 4.3]{MR1367690} states that $\lim_{n \to \infty} (L*U)_j(x+n\tau) = \sum_{k \in {\Z}} L_j(x-a_{1j}+k\tau)$ for all $a_{ij} \in \spt(\mu_{\gamma(1,j)})$, where $\gamma(1,j)$ is any path from $1$ to $j$ such that $\mu_{\gamma(1,j)} \ne 0$. By recalling \eqref{support set}, we see that $a_{1j}$ is an integer multiple of $\tau$, and the result improves immediately.

\section{Strongly connected inhomogeneous graph-directed self-similar sets} \label{sec:SC-inhomog}

In \S \ref{sec:small-cond}, we prove the first main theorem, Theorem \ref{thm:GIFS-renewal-result}. The method follows a standard route: we reduce the problem to the analysis of a renewal equation, which in our setting is vector valued, and apply the renewal theorem, Theorem \ref{LWC}, of Lau, Wang, and Chu \cite{MR1367690}. The main difficulty is to establish direct Riemann integrability of the function that appears in the renewal equation. Under the SOSC, this amounts to showing that the contribution to the covering number from the overlap between cells is of smaller order than the bulk, so that the difference between the covering number on different scales can be controlled. The second main theorem, Theorem \ref{thm:GIFS-renewal-result2}, is more straightforward; we prove it in \S \ref{sec:large-cond}.

\subsection{Small condensation sets} \label{sec:small-cond}

We work in the setting of Mauldin-Williams graphs introduced in \S\ref{sec:GDIFS}. With a slight abuse of notation, we define
\begin{equation}\label{eq:GMgraphdir}
  G_M=\biggl\langle\bigcup_{i\in V}\bigcup_{n=1}^\infty \{-\log r_\gamma : \gamma\in\Gamma_{i,i}^n\}\biggl\rangle
\end{equation}
for all Mauldin-Williams graphs $G$. The abuse is justified as we will soon see that the above definition agrees with \eqref{support set}.

To prove Theorem \ref{thm:GIFS-renewal-result}, the task is to verify that we are in the setting of the vector-valued renewal theorem explained in \S\ref{sec:vector-renewal} and then apply Theorem \ref{LWC}. We remark that if $G$ were a strongly connected Mauldin-Williams graph, then in the case $G_M=\R$, we could have also applied \cite[Theorem 4]{Lalley89}. We use the vector-valued renewal theorem to extend the result to strongly connected inhomogeneous graphs. We also remark that \cite[Theorem 4]{Lalley89} does not cover the case $G_M=\langle \{\tau\} \rangle$. The role of Lemma \ref{lem:Liestimate} is to ensure the inhomogeneous term $L^*$ in the renewal equation is directly Riemann integrable under the condensation condition \eqref{eq:condensation-finite}; Lemma \ref{estimatelemma} bounds the covering number near the boundary of the open sets so that Lemma \ref{lem:Liestimate} applies. We then deduce Theorem \ref{thm:GIFS-renewal-result} by applying Theorem \ref{LWC}.

We begin by examining the behavior of the covering function. Fix $i\in V=\{1, \ldots, N\}$ and define $L^* = (L_1^*,\ldots,L_N^*) \colon \R \to \R^N$ by setting
\begin{equation}\label{eq:Li}
  L_i^*(t) = \sum_{e\in E_i} N_{e^{-t}}(S_e(K_{t(e)}^C)) - N_{e^{-t}}(K_i^C) 
\end{equation}
for all $t \in \R$. By \eqref{invariantlist}, we see that
\begin{equation*}
N_{e^{-t}}(K_i^C) = \sum_{j=1}^N\sum_{e\in E_{ij}}N_{e^{-t}}(S_e(K_j^C)) - L_i^*(t) = \sum_{j=1}^N\sum_{e\in E_{ij}}N_{e^{-t}r_e^{-1}}(K_j^C) - L_i^*(t)
\end{equation*}
for all $t\in{\R}$. Let $f^* = (f^*_1,\ldots,f^*_N) \colon \R \to \R^N$ be such that
\begin{equation} \label{eq:def-fstar}
  f^*_j(t) = N_{e^{-t}}(K_j^C)e^{-s_0t}
\end{equation}
for all $t\in{\R}$ and $j\in\{1, \ldots, N\}$. Observe that
\begin{align*}
f^*_j(t-\log r_e^{-1})r_e^{s_0}&=N_{e^{-(t-\log r_e^{-1})}}(K_j^C)e^{-s_0(t-\log r_e^{-1})}e^{\log r_e^{s_0}}\\
&=N_{e^{-(t-\log r_e^{-1})}}(K_j^C)e^{-s_0t}\\
&=N_{e^{-t} r_e^{-1}}(K_j^C)e^{-s_0t}
\end{align*}
for all $e\in E_{ij}$ and hence,
\begin{align*}
\sum_{e\in E_{ij}} N_{e^{-t} r_e^{-1}}(K_j^C)e^{-s_0t}=\sum_{e\in E_{ij}} f^*_j(t-\log r_e^{-1})r_e^{s_0}.
\end{align*}
Furthermore,
\begin{equation} \label{equality1}
\begin{split}
f^*_i(t)&=N_{e^{-t}}(K_i^C)e^{-s_0t}=\sum_{j=1}^N\sum_{e\in E_{ij}} N_{e^{-t}r_e^{-1}}(K_j^C)e^{-s_0t}-L_i^*(t)e^{-s_0t} \\
&=\sum_{j=1}^N\sum_{e\in E_{ij}} f^*_j(t-\log r_e^{-1})r_e^{s_0}-L_i^*(t)e^{-s_0t} \\
&=\sum_{j=1}^N f^*_j*\biggl(\sum_{e\in E_{ij}} r_e^{s_0}\delta_{\log r_e^{-1}}\biggr)(t)-L_i^*(t)e^{-s_0t}
\end{split}
\end{equation}
for all $t\in{\R}$, where $\delta_x$ is the Dirac measure at $x \in \R$. Write $\mu_{ij}=\sum_{e \in E_{ij}} r_e^{s_0} \delta_{\log r_e^{-1}}$ for all $i,j \in \{1,\ldots,N\}$ and let
\begin{equation}\label{eq:defM}
  M =
  \begin{pmatrix}
    \mu_{11} & \cdots & \mu_{N1} \\ \vdots & \ddots  &\vdots \\  \mu_{{1N}} & \cdots & \mu_{NN}
  \end{pmatrix}
\end{equation}
be the corresponding matrix valued Radon measure. Then, by \eqref{equality1},
\begin{equation}\label{eq:f*}
  f^*(t) = (f^**M)(t)-L^*(t)e^{-s_0t}
\end{equation}
for all $t \in \R$. Observe that $F_M(\infty) = (A^{s_0}_G)^\top$ and hence, $\rho(F_M(\infty)) = \rho(A^{s_0}_G) = 1$. Recall also that $F_M(\infty)$ is irreducible if and only if $G$ is strongly connected. Observe also that, with the above choices, the closed subgroup $G_M$ defined in \eqref{eq:GMgraphdir} is the same as the closed subgroup defined in \eqref{support set}.

Before going into the proof of Theorem \ref{thm:GIFS-renewal-result}, we show the following estimate for $L^*$.

\begin{lemma}\label{lem:Liestimate}
  Let $(G,C)$ be a strongly connected inhomogeneous Mauldin-Williams graph satisfying the SOSC with condensation sets $(C_i)_{i \in \{1,\ldots,N\}}$ such that $\int_0^\infty e^{-s_0t}N_{e^{-t}}(C_i)\dd t<\infty$ for all $i \in \{1,\ldots,N\}$, where $s_0 = s_0(G)$ is as in \eqref{eq:s0-def}. Then 
  $$
  \sum_{k=0}^\infty\sup_{t\in[k,k+1]}e^{-s_0t}|L_i^*(t)|<\infty
  $$
  for all $i \in \{1,\ldots,N\}$, where $L_i^*$ is as in \eqref{eq:Li}.
\end{lemma}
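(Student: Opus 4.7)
The plan is to decompose $|L_i^*(t)|$ into a pure overlap contribution (controlled by the SOSC geometry) and a condensation contribution (controlled by the integrability hypothesis). From the inclusions $\bigcup_{e \in E_i} S_e(K_{t(e)}^C) \subseteq K_i^C \subseteq \bigcup_{e \in E_i} S_e(K_{t(e)}^C) \cup C_i$, I obtain for $r = e^{-t}$ the sandwich
\begin{equation*}
  N_r\Bigl(\bigcup_{e \in E_i} S_e(K_{t(e)}^C)\Bigr) \le N_r(K_i^C) \le N_r\Bigl(\bigcup_{e \in E_i} S_e(K_{t(e)}^C)\Bigr) + N_r(C_i),
\end{equation*}
so that, setting
\begin{equation*}
  L_i^{\mathrm{ov}}(t) := \sum_{e \in E_i} N_{e^{-t}}(S_e(K_{t(e)}^C)) - N_{e^{-t}}\Bigl(\bigcup_{e \in E_i} S_e(K_{t(e)}^C)\Bigr) \ge 0,
\end{equation*}
one gets $|L_i^*(t)| \le L_i^{\mathrm{ov}}(t) + N_{e^{-t}}(C_i)$. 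It therefore suffices to verify summability of each piece separately.

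For the condensation contribution, since $t \mapsto N_{e^{-t}}(C_i)$ is non-decreasing, a direct comparison of the supremum over a unit interval with an integral over a neighboring unit interval yields
\begin{equation*}
  \sum_{k=0}^\infty \sup_{t\in[k,k+1]} e^{-s_0 t} N_{e^{-t}}(C_i) \le e^{2s_0}\int_0^\infty e^{-s_0 t} N_{e^{-t}}(C_i) \dd t < \infty
\end{equation*}
by the integrability assumption.

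For the overlap contribution, the key observation is that the pure cylinder pieces are well-separated from one another. Define the overlap neighborhood
\begin{equation*}
  \mathcal{O}_i(\rho) := \bigcup_{e \ne e' \in E_i} \{x \in \R^d : \dist(x, S_e(K_{t(e)}^C)) \le \rho \text{ and } \dist(x, S_{e'}(K_{t(e')}^C)) \le \rho\}.
\end{equation*}
The pure parts $S_e(K_{t(e)}^C) \setminus \mathcal{O}_i(2r)$ are pairwise $>2r$-apart for distinct $e \in E_i$, so their optimal $r$-covers are mutually disjoint, giving
\begin{equation*}
  N_r\Bigl(\bigcup_{e \in E_i} S_e(K_{t(e)}^C)\Bigr) \ge \sum_{e \in E_i} N_r(S_e(K_{t(e)}^C) \setminus \mathcal{O}_i(2r)).
\end{equation*}
Combined with subadditivity $N_r(S_e(K_{t(e)}^C)) \le N_r(S_e(K_{t(e)}^C) \cap \mathcal{O}_i(2r)) + N_r(S_e(K_{t(e)}^C) \setminus \mathcal{O}_i(2r))$, this gives the clean bound
\begin{equation*}
  L_i^{\mathrm{ov}}(t) \le \sum_{e \in E_i} N_r(S_e(K_{t(e)}^C) \cap \mathcal{O}_i(2r)) \le |E_i| \cdot N_r(\mathcal{O}_i(2r)).
\end{equation*}

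The remaining and main obstacle will be to prove $\sum_k \sup_{t\in[k,k+1]} e^{-s_0 t} N_{e^{-t}}(\mathcal{O}_i(2 e^{-t})) < \infty$. Under SOSC, any point of the direct overlap $S_e(\overline{U_{t(e)}}) \cap S_{e'}(\overline{U_{t(e')}})$ with $e \ne e'$ lies on the cylinder boundary $S_e(\partial U_{t(e)}) \cup S_{e'}(\partial U_{t(e')})$, and the Ahlfors regularity $0 < \HH^{s_0}(K_j) < \infty$ from Theorem \ref{thm:MW-dimension}, combined with the additivity of $\HH^{s_0}$ across disjoint open cylinder images, forces this direct overlap to have zero $\HH^{s_0}$-measure. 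To upgrade this measure-theoretic statement to a summable covering-number estimate without the finite-ramification shortcut of Hambly and Nyberg (where $\mathcal{O}_i(r)$ reduces to finitely many points), I would unfold the overlap recursively using the graph-directed self-similar structure: a point of $\mathcal{O}_i(2r)$ either reflects an overlap already visible at level one or refines through the sub-cylinders $S_\gamma$ for longer paths $\gamma \in \Gamma_i^*$, producing a decomposition whose total covering-number contribution telescopes into a geometric series governed by $\max_e r_e < 1$. This recursive boundary analysis is the technical core that bypasses finite ramification.
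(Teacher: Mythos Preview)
Your decomposition $|L_i^*(t)|\le L_i^{\mathrm{ov}}(t)+N_{e^{-t}}(C_i)$ and the treatment of the condensation term are correct and essentially match the paper's approach (the paper obtains the same lower bound $L_i^*(t)\ge -N_{e^{-t}}(C_i)$ and reduces the upper bound to a boundary-neighbourhood covering estimate). The gap is in your final paragraph: you do not actually prove the summability of $e^{-s_0t}N_{e^{-t}}(\mathcal{O}_i(2e^{-t}))$, you only sketch an intention, and the sketch contains a wrong heuristic.

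The problem is that ``zero $\mathcal{H}^{s_0}$-measure'' does not upgrade for free to ``upper Minkowski dimension $<s_0$'', which is what you need for $\sum_k\sup_{[k,k+1]}e^{-s_0t}N_{e^{-t}}(\mathcal{O}_i(2e^{-t}))<\infty$. Your proposed ``geometric series governed by $\max_e r_e<1$'' cannot deliver this: at depth $n$ the recursion produces on the order of $|E|^n$ boundary pieces of scale $r_{\min}^n$, and there is no reason for $|E|^n r_{\min}^{ns_0}$ to be summable. You need a mechanism that strictly lowers the pressure, not just the scale. The paper supplies exactly this: the SOSC is used to find, for each vertex $j$, a cylinder $S_{\gamma_j}(\overline{U_{t(\gamma_j)}})$ at positive distance $R$ from $\partial U_j$; any point of $K_i^C$ within $e^{-t}$ of a cylinder boundary must then lie along a path that \emph{avoids all of the $\gamma_j$} down to scale $e^{-t}$. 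The sub-system obtained by deleting these $\gamma_j$ from the $n_0$-th iterate is still irreducible but has strictly smaller Perron--Frobenius spectral radius, hence dimension $q<s_0$, and this dimension drop is what makes $\int_0^\infty e^{-s_0t}N_{e^{-t}}(K_i^C\cap[\partial U_i]_{e^{-t}})\,\mathrm{d}t<\infty$ (Lemma~\ref{estimatelemma} in the paper). Your recursive-unfolding idea is in the right direction, but without this Perron--Frobenius dimension-drop argument it does not close.
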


The proof of Lemma \ref{lem:Liestimate} relies on the covering number estimate for the neighborhood of the boundary. The estimate is used to control the boundary contribution to $L^*$. Recall that if $A,B \subset \R^d$ and $\delta > 0$, then the $\delta$-neighborhood of $A$ is $[A]_\delta = \{x \in \R^d : |x-y| \le \delta \text{ for some } y \in A\}$, the distance between $A$ and $B$ is $\dist(A,B) = \inf\{|x-y| : x \in A \text{ and } y \in B\}$, and the distance between $x \in \R^d$ and $A$ is $\dist(x,A)=\dist(\{x\},A)$.

\begin{lemma} \label{estimatelemma}
  Let $(G,C)$ be a strongly connected inhomogeneous Mauldin-Williams graph satisfying the SOSC with open sets $(U_i)_{i \in \{1,\ldots,N\}}$ and condensation sets $(C_i)_{i \in \{1,\ldots,N\}}$ such that $\int_0^\infty e^{-s_0t}N_{e^{-t}}(C_i)\dd t<\infty$ for all $i \in \{1,\ldots,N\}$, where $s_0 = s_0(G)$ is as in \eqref{eq:s0-def}. Then
  $$
  \int_0^\infty e^{-s_0t}N_{e^{-t}}(K_i^C\cap[\partial U_i]_{e^{-t}})\dd t<\infty
  $$
  for all $i\in\{1, \ldots, N\}$.
\end{lemma}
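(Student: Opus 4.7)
The plan is to derive a self-referential inequality for $G_i(t):=N_{e^{-t}}(K_i^C\cap[\partial U_i]_{e^{-t}})$ from the self-similar structure, iterate it $n$ steps to a level at which the SOSC forces a strictly interior cylinder at every vertex, and then close the resulting integral bound by a Collatz--Wielandt comparison against the Perron eigenvector of $A_G^{s_0}$.

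First I would establish the basic recursion. Since $S_e(U_{t(e)})\subset U_i$, a short line-segment argument shows that any $x=S_e(y)\in S_e(\overline{U_{t(e)}})$ with $\dist(x,\partial U_i)\le e^{-t}$ forces $\dist(y,\partial U_{t(e)})\le e^{-t}/r_e$: either the nearest boundary point $z$ on $\partial U_i$ to $x$ lies in $S_e(\overline{U_{t(e)}})$ (in which case $z\in S_e(\partial U_{t(e)})$ because $S_e(U_{t(e)})\subset U_i$) or the segment from $x$ to $z$ crosses $\partial S_e(\overline{U_{t(e)}})=S_e(\partial U_{t(e)})$. Combined with $K_i^C=C_i\cup\bigcup_{e\in E_i}S_e(K_{t(e)}^C)$ and the similarity factor $r_e$, this yields
\[
  G_i(t) \le N_{e^{-t}}(C_i) + \sum_{e\in E_i} G_{t(e)}(t-\log r_e^{-1})\qquad\text{for all } t\in\R.
\]

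Next I would use the SOSC to produce, at a common finite level $n$, a strictly interior path $\gamma_i^*\in\Gamma_i^n$ for every $i\in V$: pick $x_i\in U_i\cap K_i$, let $\omega^{(i)}\in\Gamma_i$ satisfy $\Pi_i(\omega^{(i)})=x_i$, and observe that since $\diam(S_{\omega^{(i)}|_k}(\overline{U_{t(\omega^{(i)}|_k)}}))\to0$ while $U_i$ is open, long prefixes of $\omega^{(i)}$ lie in $U_i$; finiteness of $V$ and inheritance of interiority by sub-cylinders yield a uniform $n$, with $\delta:=\min_i\dist(S_{\gamma_i^*}(\overline{U_{t(\gamma_i^*)}}),\partial U_i)>0$. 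Iterating the one-step recursion $n$ times and keeping, on the terminal layer, only those $\gamma\in B_i^{(n),\partial}:=\{\gamma\in\Gamma_i^n:S_\gamma(\overline{U_{t(\gamma)}})\cap\partial U_i\ne\emptyset\}\subsetneq\Gamma_i^n$ that can still reach $[\partial U_i]_{e^{-t}}$ once $t\ge\log(1/\delta)$ produces
\[
  G_i(t) \le \sum_{k=0}^{n-1}\sum_{\gamma\in\Gamma_i^k}N_{e^{-(t-\log r_\gamma^{-1})}}(C_{t(\gamma)}) + \sum_{\gamma\in B_i^{(n),\partial}}G_{t(\gamma)}(t-\log r_\gamma^{-1}).
\]

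Multiplying by $e^{-s_0 t}$ and integrating the truncation $\phi_i^{(T)}:=\int_0^T e^{-s_0 t}G_i(t)\dd t$, a change of variables yields an inequality $\vec\phi^{(T)}\le\vec c+\tilde A\,\vec\phi^{(T)}$ in which $\tilde A_{ij}:=\sum_{\gamma\in B_i^{(n),\partial},\,t(\gamma)=j}r_\gamma^{s_0}$ and the vector $\vec c$ is finite and independent of $T$ thanks to the hypothesis on $C_i$, the finiteness of $\Gamma_i^k$ for $k\le n$, and the uniform boundedness of $G_i$ on compact intervals. The central observation is that $\rho(\tilde A)<1$: letting $v>0$ denote the right Perron eigenvector of the irreducible matrix $A_G^{s_0}$ (so $(A_G^{s_0})^n v=v$), the fact that $\tilde A\le(A_G^{s_0})^n$ componentwise with row-$i$ deficit of at least $r_{\gamma_i^*}^{s_0}v_{t(\gamma_i^*)}>0$ for every $i$ gives $(\tilde A v)_i\le v_i-r_{\gamma_i^*}^{s_0}v_{t(\gamma_i^*)}$, hence $\tilde A v\le\lambda v$ with $\lambda:=\max_i(1-r_{\gamma_i^*}^{s_0}v_{t(\gamma_i^*)}/v_i)<1$, so $\rho(\tilde A)\le\lambda<1$ by Collatz--Wielandt. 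Consequently $I-\tilde A$ is invertible with nonnegative inverse, $\vec\phi^{(T)}\le(I-\tilde A)^{-1}\vec c$ uniformly in $T$, and letting $T\to\infty$ gives the claim. The most delicate step is securing a common level $n$ with a strictly interior cylinder from every vertex; once this is in place, the Collatz--Wielandt comparison cleanly sidesteps any issue with irreducibility or periodicity of $\tilde A$, and the remainder is bookkeeping.
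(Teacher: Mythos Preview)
Your overall strategy is sound and close in spirit to the paper's: both arguments use the SOSC to produce, at a common depth $n$, an interior cylinder $\gamma_i^*$ at each vertex, and both exploit the resulting strict domination $\tilde A\lneq (A_G^{s_0})^n$ to force a spectral gap. Where the paper unrolls this to a full Moran-type decomposition over all depths $mn_0$ (introducing the auxiliary exponent $q<s_0$ and counting cylinders in $\mathcal{M}_{e^{-t}}(i)$ directly), your integral-inequality closure $\vec\phi^{(T)}\le\vec c+\tilde A\,\vec\phi^{(T)}$ via Collatz--Wielandt is a clean alternative that avoids that machinery.

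There is, however, a genuine gap in how you arrive at the level-$n$ inequality. Iterating the one-step recursion $G_i(t)\le N_{e^{-t}}(C_i)+\sum_{e\in E_i}G_{t(e)}(t-\log r_e^{-1})$ literally $n$ times yields a sum over \emph{all} $\gamma\in\Gamma_i^n$: each application of the recursion replaces the constraint ``near $\partial U_i$'' by ``near $\partial U_{t(e)}$'' and thereby forgets the original boundary. After $n$ iterations the terminal terms $G_{t(\gamma)}(t-\log r_\gamma^{-1})$ measure proximity to $\partial U_{t(\gamma)}$, not to $\partial U_i$, so there is no justification for discarding the interior paths, and the resulting matrix is $(A_G^{s_0})^n$ with spectral radius $1$. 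The fix is straightforward: do not iterate, but decompose $K_i^C$ directly at level $n$ as $\bigcup_{k<n}\bigcup_{\gamma\in\Gamma_i^k}S_\gamma(C_{t(\gamma)})\cup\bigcup_{\gamma\in\Gamma_i^n}S_\gamma(K_{t(\gamma)}^C)$, intersect with $[\partial U_i]_{e^{-t}}$, first drop those $\gamma$ with $\dist(S_\gamma(\overline{U_{t(\gamma)}}),\partial U_i)>e^{-t}$ (in particular $\gamma_i^*$ once $e^{-t}<\delta$), and only then apply your line-segment argument to the surviving $\gamma$ with $S_\gamma$ in place of $S_e$ (legitimate since $S_\gamma(U_{t(\gamma)})\subset U_i$). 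This produces exactly your displayed inequality, after which the rest of your argument goes through. A minor related slip: your $\delta$ is defined only via the $\gamma_i^*$, so as stated it lets you drop only those, not all of $\Gamma_i^n\setminus B_i^{(n),\partial}$; either redefine $\delta$ as the minimum over $\gamma\notin B_i^{(n),\partial}$, or simply work with $\Gamma_i^n\setminus\{\gamma_i^*\}$ throughout, which is all the Collatz--Wielandt step requires.
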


\begin{proof}
Since $(G, C)$ satisfies the SOSC, for every $i\in\{1,\ldots,N\}$ there exists a point $x\in K_i\cap U_i$, which has a positive distance from the non-empty compact set $\partial U_i$. Hence, for every $i\in\{1,\ldots,N\}$ there exist $n_0 \in \N$ and $\gamma_i\in\Gamma_i^{n_0}$ such that $x\in S_{\gamma_i}(K_{t(\gamma_i)})\subset S_{\gamma_i}(\overline{U_{t(\gamma_i)}})$ and $S_{\gamma_i}(\overline{U_{t(\gamma_i)}})\cap \partial U_i=\emptyset$. Write
  \begin{equation*}
    R=\min_{i\in\{1,\ldots,N\}}\dist(\partial U_i,S_{\gamma_i}(\overline{U_{t(\gamma_i)}}))>0.
  \end{equation*}
  Without loss of generality, we may assume that $n_0$ is common for every $i$ and $\max_{\gamma\in E^{n_0}}r_{\gamma}<R/\max_{i\in\{1,\ldots,N\}}\mathrm{diam}(U_i)$. 
	
	Let us define now $B^q=(\sum_{\Gamma_{i,k}^{n_0}\setminus\{\gamma_i\}}r_\gamma^q)_{i,k\in\{1,\ldots,N\}}$, where $q$ is the unique solution of $\rho(B^q)=1$. Since both $A_G^{q}$ and $B^q$ are both irreducible, the Perron-Frobenius theorem implies
	\begin{equation} \label{red}
		\rho(B^{s_0})<\rho((A_G^{s_0})^{n_0})=\rho(A_G^{s_0})=1,
	\end{equation}
	and so $q<s_0$. Moreover,  let $u,v\in\R^N$ with strictly positive entries be such that $B^{q}u=u$, $v^\top B^{q}=v^\top$, and $v^\top u=1$. Now, let
  \begin{align*}
    \mathcal{M}_{e^{-t}}(i)=\{\gamma\in\bigcup_{m=1}^\infty\Gamma_i^{m n_0} : &\;r_\gamma R\leq {e^{-t}} < r_{\gamma^-}R \text{ and} \\ &\gamma \text{ does not contain any of $\{\gamma_1,\ldots,\gamma_N\}$}\}
  \end{align*}
and
\begin{align*}
	\mathcal{N}_{e^{-t}}(i)=\{\gamma\in\bigcup_{m=1}^\infty\Gamma_i^{m n_0} :&\;r_\gamma R > {e^{-t}} \text{ and} \\ &\gamma \text{ does not contain any of } \{\gamma_1,\ldots,\gamma_N\} \}
\end{align*}
	for all $t > 0$. Hence, by the definition of the vectors $u$ and $v$ and the choice of $q$,
	\begin{equation}\label{eq:1}
  \begin{split}
  	\frac{\#\mathcal{M}_{e^{-t}}(i) e^{-tq}}{R^q}&\leq(\min_iv_iu_i\cdot\min_{\gamma\in E^{n_0}}r_\gamma^q)^{-1}\sum_{\gamma\in\mathcal{M}_{e^{-t}}(i)}v_{i(\gamma)}r_\gamma^q u_{t(\gamma)}\\ &=(\min_iv_iu_i\cdot\min_{\gamma\in E^{n_0}}r_\gamma^q)^{-1}.
  \end{split}
	\end{equation}
	Write $C_i' = \bigcup_{n=0}^{n_0-1} \bigcup_{\gamma \in \Gamma_i^n} S_{\gamma}(C_{t(\gamma)})$ and
  \begin{align*}
    C_i({e^{-t}}) = \bigcup_{\gamma\in \mathcal{N}_{e^{-t}}(i)} S_{\gamma}(C_{t(\gamma)}')
  \end{align*}
  for all $t > 0$. Observe that, by \eqref{eq:inhomogeneous-def2},
  \begin{equation*}
    K_i^C \subset \bigcup_{m=0}^\infty \bigcup_{\gamma \in \Gamma_i^{mn_0}} \{S_\gamma(C_{t(\gamma)}') : r_\gamma R > {e^{-t}}\} \cup \{S_\gamma(\overline{U_{t(\gamma)}}) : r_\gamma R \le {e^{-t}} < r_{\gamma^-}R \}.
  \end{equation*}
  We claim that
  \begin{equation} \label{eq:goal-inclusion}
    K_i^C\cap[\partial U_i]_{e^{-t}} \subset C_i({e^{-t}}) \cup \bigcup_{\gamma\in\mathcal{M}_{e^{-t}}(i)}S_{\gamma}(\overline{U_{t(\gamma)}}).
  \end{equation}
  Indeed, if this was not the case, then there exist $x\in K_i^C\cap[\partial U_i]_{e^{-t}}$ and $\gamma\in\bigcup_{m=1}^\infty\Gamma_i^{mn_0}$ such that $\gamma=\gamma'\gamma_i\gamma''$ for some $i\in \{1,\ldots,N\}$ and $x \in S_\gamma(C_{t(\gamma)}')$ with $r_\gamma R > {e^{-t}}$ or $x\in S_\gamma(\overline{U_{t(\gamma)}})$ with $r_\gamma R\leq {e^{-t}}<r_{\gamma^-}R$. Then, by the SOSC,
	\[
	\begin{split}
	{e^{-t}}&\geq \dist(x,\partial U_i)\geq \dist(S_{\gamma}(\overline{U_{t(\gamma)}}),\partial U_i)\geq \dist(S_{\gamma}(\overline{U_{t(\gamma)}}),S_{\gamma'}(\partial U_{t(\gamma')}))\\
	&=r_{\gamma'}\dist(S_{\gamma_i\gamma''}(\overline{U_{t(\gamma)}}),\partial U_{t(\gamma')})\geq Rr_{\gamma'}>{e^{-t}},
  \end{split}
  \]
which, as $C_i \subset \overline{U_i}$, is a contradiction.

Observe that, by $q<s_0$ and the fact that all matrix norms are equivalent, we have
\begin{equation*}
  D = \sum_{m=0}^\infty \sum_{\gamma \in \Gamma_i^{mn_0}} \{ r_\gamma^{s_0} : \gamma \text{ does not contain any of } \{\gamma_1,\ldots,\gamma_N\} \} < \infty.
\end{equation*}
Hence, 
\begin{equation}\label{eq:first-sum-estimate}
	\begin{split}
  \int_0^\infty e^{-s_0t}&N_{e^{-t}}(C_i({e^{-t}}))\dd t \le \int_0^\infty\sum_{\gamma\in\mathcal{N}_{e^{-t}}(i)} e^{-s_0t}N_{{e^{-t}}/r_\gamma}(C_{t(\gamma)}')\dd t \\ 
  &\le \sum_{\atop{\gamma \in \bigcup_{m=1}^\infty\Gamma_i^{mn_0}}{\text{$\gamma$ does not contain any of $\{\gamma_i\}_{i\in V}$}}}\int_{-\log(r_\gamma R)}^\infty e^{-s_0t}N_{{e^{-t}}/r_\gamma}(C_{t(\gamma)}')\dd t\\
  &=\sum_{\atop{\gamma \in \bigcup_{m=1}^\infty\Gamma_i^{mn_0}}{\text{$\gamma$ does not contain any of $\{\gamma_i\}_{i\in V}$}}}r_{\gamma}^{s_0}\int_{-\log(R)}^\infty e^{-s_0t}N_{e^{-t}}(C_{t(\gamma)}')\dd t\\
  &\le D\cdot \#\biggl(\bigcup_{n=0}^{n_0-1}\Gamma_i^n\biggr)\cdot\max_{i\in V}\int_0^\infty e^{-s_0t}N_{e^{-t}}(C_i)\dd t<\infty
\end{split}
\end{equation}
Finally, by \eqref{eq:goal-inclusion}, \eqref{eq:first-sum-estimate}, and the choice of $n_0$ and \eqref{eq:1}, it is easy to see that 
\begin{align*}
  \int_0^\infty e^{-s_0t}&N_{e^{-t}}(K_i^C\cap[\partial U_i]_{e^{-t}})\dd t \leq \int_0^\infty e^{-s_0t}(N_{e^{-t}}(C_i(e^{-t}))+ \#\mathcal{M}_{e^{-t}}(i)\cdot \#E^{n_0})\dd t \\ 
  &\leq \int_0^\infty e^{-s_0t}N_{e^{-t}}(C_i(e^{-t}))\dd t + \frac{R^{q}\cdot \#E^{n_0}}{\min_iv_iu_i\cdot\min_{\gamma\in E^{n_0}}r_\gamma^q}\int_0^\infty e^{(q-s_0)t}\dd t,
\end{align*}
from which the claim follows.
\end{proof}

With Lemma \ref{estimatelemma} we can now prove Lemma \ref{lem:Liestimate}.

\begin{proof}[Proof of Lemma~\ref{lem:Liestimate}]
Let $(U_i)_{i \in \{1,\ldots,N\}}$ be the list of non-empty bounded open sets given by the SOSC. By the definition \eqref{eq:Li},
  \[
  	L_i^*(t)=\sum_{e\in E_i} N_{e^{-t}}(S_e(K_{t(e)}^C)) - N_{e^{-t}}(K_i^C)
\]
for all $t \in \R$. Observe that 
\[
\begin{split}
  N_{e^{-t}}(S_e(K_{t(e)}^C))&\leq N_{e^{-t}}(S_e(K_{t(e)}^C)\setminus[S_e(\partial U_{t(e)})]_{e^{-t}/2})\\ &\qquad\quad+N_{e^{-t}/2}(S_e(K_{t(e)}^C)\cap[S_e(\partial U_{t(e)})]_{e^{-t}/2}).
\end{split}
\]
Indeed, one can cover the set $S_e(K_{t(e)}^C)$ by first covering $S_e(K_{t(e)}^C)\setminus[S_e(\partial U_{t(e)})]_{e^{-t}/2}$ with balls of radius $e^{-t}$ and then doubling the radius of balls in the cover of $S_e(K_{t(e)}^C)\cap[S_e(\partial U_{t(e)})]_{e^{-t}/2}$, where balls have radius $e^{-t}/2$.

On the other hand, since 
$$
\dist(S_e(K_{t(e)}^C)\setminus[S_e(\partial U_{t(e)})]_{e^{-t}/2},S_{e'}(K_{t(e')}^C)\setminus[S_{e'}(\partial U_{t(e')})]_{e^{-t}/2})>e^{-t}
$$
whenever $e,e'\in E_i$ with $e\neq e'$, one can see that
\[
\begin{split}
N_{e^{-t}}(K_i^C)&\geq N_{e^{-t}}\biggl(\bigcup_{e\in E_i}S_e(K_{t(e)}^C)\setminus[S_e(\partial U_{t(e)})]_{e^{-t}/2}\biggr)\\
&=\sum_{e\in E_i}N_{e^{-t}}(S_e(K_{t(e)}^C)\setminus[S_e(\partial U_{t(e)})]_{e^{-t}/2}).
\end{split}
\]
Therefore,
\begin{equation}\label{eq:finalestimate1}
\begin{split}
  L_i^*(t)&\leq \sum_{e\in E_i}N_{e^{-t}/2}(S_e(K_{t(e)}^C)\cap[S_e(\partial U_{t(e)})]_{e^{-t}/2})\\
  &=\sum_{e\in E_i}N_{e^{-t}/2}(S_e(K_{t(e)}^C)\cap S_e([\partial U_{t(e)}]_{r_e^{-1}e^{-t}/2}))\\
  &=\sum_{e\in E_i}N_{r_e^{-1}e^{-t}/2}(K_{t(e)}^C\cap[\partial U_{t(e)}]_{r_e^{-1}e^{-t}/2})\\
\end{split}
\end{equation}
for all $t\geq0$. On the other hand,
\begin{equation}\label{eq:finalestimate2}
\begin{split}
  L_i^*(t)&=\sum_{e\in E_i} N_{e^{-t}}(S_e(K_{t(e)}^C)) - N_{e^{-t}}(K_i^C)\\
  &=\sum_{e\in E_i} N_{e^{-t}}(S_e(K_{t(e)}^C)) - N_{e^{-t}}(\bigcup_{e\in E_i}S_e(K_{t(e)}^C)\cup C_i)\\
  &\geq\sum_{e\in E_i} N_{e^{-t}}(S_e(K_{t(e)}^C)) - \biggl(\sum_{e\in E_i}N_{e^{-t}}(S_e(K_{t(e)}^C))+N_{e^{-t}}(C_i)\biggr)\\
  &=-N_{e^{-t}}(C_i).
\end{split}
\end{equation}
Combining of \eqref{eq:finalestimate1} and \eqref{eq:finalestimate2}, it is enough to show for the maps $t\mapsto e^{-s_0t}N_{e^{-t}}(C_i)$ and $t\mapsto e^{-s_0t}N_{e^{-t}}(K_i^C\cap[\partial U_i]_{e^{-t}})$ that
$$
  \sum_{k=0}^\infty\sup_{t\in[k,k+1]}e^{-s_0t}N_{e^{-t}}(C_i)<\infty\quad\text{and}\quad\sum_{k=0}^\infty\sup_{t\in[k,k+1]}e^{-s_0t}N_{e^{-t}}(K_i^C\cap[\partial U_i]_{e^{-t}})<\infty.
$$
This follows from Lemma~\ref{estimatelemma} and the assumption on the condensation sets together with the fact that there exists a constant $C>0$ such that
$$
  N_{e^{-(t+\tau)}}(A)\leq CN_{e^{-t}}(A)
$$
for every $t>0$, $\tau\in[0,1]$, and every bounded $A\subset\R^d$.
\end{proof}

Having Lemma \ref{lem:Liestimate} and Theorem \ref{LWC} at our disposal, we are now ready to prove Theorem \ref{thm:GIFS-renewal-result}.

\begin{proof}[Proof of Theorem~\ref{thm:GIFS-renewal-result}] Recall the definitions of $f^*_i$ and $L^*_i$ from \eqref{eq:def-fstar} and \eqref{eq:Li}. By \eqref{eq:f*}, it is tempting to try to apply Theorem~\ref{LWC} with the functions $t \mapsto f_i^*(t)$ and $t \mapsto -L_i^*(t)e^{-s_0t}$. Unfortunately, this does not work since the functions do not vanish for $t<0$. Therefore, let us define $f=(f_1,\ldots,f_N) \colon \R \to \R^N$ and $L=(L_1,\ldots,L_N) \colon \R \to \R^N$ by setting
\begin{equation*}
f_i(t)=
\begin{cases}
f^*_i(t), &\text{if } t\geq 0, \\
0, &\text{if } t<0,
\end{cases}
\end{equation*}
and
\begin{equation*}
L_i(t)=
\begin{cases}
-L_i^*(t)e^{-ts_0}+\sum_{j=1}^N\sum_{e\in E_{ij}\,:\,t<\log r_e^{-1}} f^*_j(t-\log {r_e}^{-1})r_e^{s_0}, &\text{if } t\geq 0, \\
0, &\text{if } t<0,
\end{cases}
\end{equation*}
for all $i\in\{1,\ldots, N\}$. Let us first show that $f$ and $L$ satisfy the inhomogeneous convolution equation
\begin{equation}\label{eq:wewant}
f=f*M+L,
\end{equation}
where $M$ is the matrix valued Radon measure defined in \eqref{eq:defM}. If $t<0$, then \eqref{eq:wewant} holds trivially. We may thus assume that $t\geq0$. If $i\in\{1,\ldots,N\}$, then
\[
\begin{split}
	f_i(t)&=f_i^*(t)=\sum_{j=1}^N\sum_{e\in E_{ij}} f_j^*(t-\log r_e^{-1})r_e^{s_0}-L_i^*(t)e^{-ts_0}\\
	&=\sum_{j=1}^N\sum_{e\in E_{ij}} f_j(t-\log r_e^{-1})r_e^{s_0}+\sum_{j=1}^N\sum_{e\in E_{ij}\,:\,t<\log r_e^{-1}} f^*_j(t-\log {r_e}^{-1})r_e^{s_0}-L_i^*(t)e^{-ts_0}\\
	&=\sum_{j=1}^N(f_j*\mu_{i,j})(t)+L_i(t),
\end{split}\]
where in the second equality we applied \eqref{equality1}.

Let us then show that $L$ is directly Riemann-integrable. Since each $L_i$ is bounded, and continuous outside of a countable set, $L_i$ is Riemann integrable on every compact interval. By the triangle inequality,
\begin{equation} \label{estioferrortwo}
  |L_i(t)| \leq |L_i^*(t)|e^{-ts_0}+\sum_{j=1}^N\sum_{e\in E_{ij}\,:\,t<\log r_e^{-1}} f^*_j(t-\log r_e^{-1})r_e^{s_0}
\end{equation}
for all $t\geq0$. If $t\geq \max_{e\in E_i} \log r_e^{-1}$, then the sum in the right-hand side of \eqref{estioferrortwo} is zero and if $0\leq t\leq \max_{e\in E_i} \log r_e^{-1}$, then the sum can have only a finite number of values. Hence, the direct Riemann integrability of $L_i$ follows by Lemma~\ref{lem:Liestimate}.

Since $L$ is directly Riemann integrable and $f$ satisfies \eqref{eq:wewant}, we may apply Theorem~\ref{LWC} to get information on the limiting behavior of $f^*_i$ defined in \eqref{eq:def-fstar}. Indeed, if $G_M=\R$, then there exists a constant $h_i$ such that
\begin{equation*}
  \lim_{t\to\infty}f^*_i(t)=\lim_{t\to\infty}f_i(t)=h_i,
\end{equation*}
and if $G_M=\langle\{\tau\}\rangle$ for some $\tau>0$, then there exists a $\tau$-periodic function $h_i\colon\R\to\R$ such that
\begin{equation*}
  \lim_{n\to\infty}f^*_i(x+n\tau)=\lim_{n\to\infty}f_i(x+n\tau)=h_i(x)
\end{equation*}
for every $x\in[0,\tau)$. It remains to show that $h_i>0$ and $h_i \colon \R \to [\delta,\infty)$, respectively. But this follows since, by Theorem~\ref{thm:MW-OSC-SOSC} and \eqref{eq:inhomogeneous-def2},
\begin{equation*}
  0<\mathcal{H}^{s_0}(K_i)\leq \mathcal{H}^{s_0}(K_i^C) \le \liminf_{t\to\infty}N_{e^{-t}}(K_i^C)e^{-ts_0}=\liminf_{t\to\infty}f_i(t).
\end{equation*}
Hence, if $G_M=\R$, then the constant $h_i$ is positive, and if $G_M=\langle\{\tau\}\rangle$, then we may choose $\delta=\min_{i \in \{1,\ldots,N\}}\inf_{x\in[0,\tau]}h_i(x)>0$.

Finally, we show the claims for $K^C$. Observe that
\begin{equation}\label{eq:needforcor}
  \lim_{t\to\infty}e^{-s_0t}(N_{e^{-t}}(K_i^C)-N_{e^{-t}}(K_i^C\setminus[\partial U_i]_{e^{-t}}))=0
\end{equation}
for all $i \in \{1,\ldots,N\}$. Indeed, since
\begin{equation*}
  N_{e^{-t}}(K_i^C\setminus[\partial U_i]_{e^{-t}})\leq N_{e^{-t}}(K_i^C)\leq N_{e^{-t}}(K_i^C\setminus[\partial U_i]_{e^{-t}})+N_{e^{-t}}(K_i^C\cap[\partial U_i]_{e^{-t}}),
\end{equation*}
the equation \eqref{eq:needforcor} follows by applying Lemma~\ref{estimatelemma}. Let $(t_n)_{n \in \N}$ be a sequence of positive real numbers converging to $\infty$ such that $\lim_{n\to \infty} e^{-s_0t_n}N_{e^{-t_n}}(K_{i}^C)$ exists for all $i\in\{1,\ldots,N\}$. It follows that
\begin{equation*}
  \limsup_{n\to \infty} e^{-s_0t_n}N_{e^{-t}}(K^C) \le \limsup_{n\to \infty} \sum_{i=1}^N e^{-s_0t_n}N_{e^{-t_n}}(K_{i}^C).
\end{equation*}
By \eqref{eq:needforcor} and the fact that $\mathrm{dist}(K_i^C\setminus[\partial U_i]_{e^{-t}},K_j^C\setminus[\partial U_j]_{e^{-t}})>e^{-t}$ whenever $i\neq j$, we also have
\begin{equation*}
\begin{split}
  \liminf_{n\to \infty} e^{-s_0t_n}N_{e^{-t_n}}(K^C) &\ge \liminf_{n\to \infty} e^{-s_0t_n}N_{e^{-t_n}}\biggl(\bigcup_{i=1}^NK_{i}^C\setminus[\partial U_i]_{e^{-t_n}}\biggr)\\
  &=\liminf_{n\to \infty}\sum_{i=1}^N e^{-s_0t_n}N_{e^{-t_n}}(K_{i}^C\setminus[\partial U_i]_{e^{-t_n}})\\
  &=\liminf_{n\to \infty} \sum_{i=1}^N e^{-s_0t_n}N_{e^{-t_n}}(K_{i}^C).
\end{split}
\end{equation*}
Therefore,
\begin{equation*}
  \lim_{n\to\infty}e^{-s_0t_n}N_{e^{-t}}(K^C)=\lim_{n\to\infty}\sum_{i=1}^N e^{-s_0t_n}N_{e^{-t_n}}(K_{i}^C).
\end{equation*}
If $G_M=\R$, then this gives the claim, and if $G_M=\langle\{\tau\}\rangle$, the claim the follows by choosing $t_n=n\tau+y$ for all $n \in \N$.
\end{proof}

\subsection{A large condensation set} \label{sec:large-cond}

The remaining part of the paper is devoted to the proof of Theorem~\ref{thm:GIFS-renewal-result2}. Let us assume that there exists $i\in V=\{1,\ldots,N\}$ such that $\int_0^\infty e^{-s_0t}N_{e^{-t}}(C_i)\dd t=\infty$. Fix $\rho<r_{\min}=\min_{e\in E}r_{e}$ and notice that 
\begin{equation}\label{eq:neededto2}
\int_0^\infty e^{-s_0t}N_{e^{-t}}(C_i)\dd t=\infty\quad\Leftrightarrow\quad\sum_{k=1}^\infty \rho^{ks_0}N_{\rho^k}(C_i)=\infty.
\end{equation}
Let $j\in \{1,\ldots,N\}$ and write 
$$
\mathcal{K}_k(j)=\{\gamma\in\Gamma_j^*:r_\gamma\leq \rho^k< r_{\gamma_-}\}
$$
for all $k \in \N$. By the choice of $\rho$, we have $\mathcal{K}_k(j)\cap\mathcal{K}_m(j)=\emptyset$ whenever $k\neq m$. Let $A_G^{s_0}$ be the strongly irreducible matrix defined in \S \ref{sec:GDIFS}. By the Perron-Frobenius theorem, there exists vectors $u,v\in\R^N$ with strictly positive entries such that
$$
\mathds{1}^\top v=1,\quad v^\top u=1,\quad A_G^{s_0}u=u,\quad v^\top A_G^{s_0}=v^\top.
$$
Let us define a Markov measure on $\Gamma$ as follows: for every $n \in \N$ and a finite path $\gamma = (e_1,\ldots,e_n) \in \Gamma^n$ let
$$
\mu([e_1,\ldots,e_n])=v_{i(e_1)}r_{e_1}^{s_0}\cdots r_{e_n}^{s_0}u_{t(e_n)},
$$
where $[e_1,\ldots,e_n] = \{\gamma \in \Gamma : \gamma|_n = (e_1,\ldots,e_n)\}$, and extend it to a measure by Kolmogorov's extension theorem. Since $\mathcal{K}_k(j)$ forms a partition of $\Gamma_j$, we get
$$
\sum_{\gamma\in\mathcal{K}_k(j)}\mu([\gamma])=v_{j},
$$
for every $j\in \{1,\ldots,N\}$ and $k\in\N$. 

By the strong condensation open set condition, there exists $\delta>0$ such that for every $\gamma,\gamma'\in\Gamma^*$ with $\gamma\neq\gamma'$ we have
$$
\dist(S_{\gamma}(C_{t(\gamma)}),S_{\gamma'}(C_{t(\gamma')}))>\delta r_{\gamma\wedge\gamma'},
$$
where we recall that $\gamma\wedge\gamma'$ is the common beginning of the paths $\gamma$ and $\gamma'$. Since $(G,C)$ is strongly connected, there exists $q\geq1$ such that for every $j,\ell\in V$ there exists a path $\alpha(j,\ell)$ of length $q$ with initial vertex $j$ and terminal vertex $\ell$. Fix $t>0$ and let $k\geq0$ be such that $\delta r_{\min}^q\rho^k>e^{-t}\geq \delta r_{\min}^q\rho^{k+1}$. It follows that
$$
\dist(S_{\gamma}\circ S_{\alpha(t(\gamma),i)}(C_i),S_{\gamma'}\circ S_{\alpha(t(\gamma'),i)}(C_i))>e^{-t}
$$
for every $\gamma,\gamma'\in\bigcup_{\ell=0}^k\mathcal{K}_\ell(j)$ with $\gamma\neq\gamma'$. Therefore, by recalling \eqref{eq:inhomogeneous-def2}, we get that
\begin{align*}
	e^{-s_0t}N_{e^{-t}}(K_j^C)&\geq e^{-s_0t}N_{e^{-t}}\biggl(\bigcup_{\gamma\in\Gamma_j^*}S_{\gamma}(C_{t(\gamma)})\biggr)\\
	&\geq\sum_{\ell=0}^k\sum_{\gamma\in\mathcal{K}_\ell(j)}e^{-s_0t}N_{e^{-t}}(S_{\gamma}\circ S_{\alpha(t(\gamma,i))}(C_i))\\
	&=\sum_{\ell=0}^k\sum_{\gamma\in\mathcal{K}_\ell(j)}e^{-s_0t}N_{e^{-t}/(r_{\gamma}r_{\alpha(t(\gamma),i)})}(C_i).
\end{align*}
Since $\delta r_{\min}^{q}\rho^{k-\ell+1}\leq e^{-t}/(r_{\gamma}r_{\alpha(t(\gamma),i)})\leq \delta \rho^{k-\ell}$ for every $\gamma\in \mathcal{K}_{\ell}(j)$, there exists a constant $c>0$ such that
\begin{align*}
	\sum_{\ell=0}^k\sum_{\gamma\in\mathcal{K}_\ell(j)}e^{-s_0t}&N_{e^{-t}/(r_{\gamma}r_{\alpha(t(\gamma),i)})}(C_i)\geq c\sum_{\ell=0}^k\sum_{\gamma\in\mathcal{K}_\ell(j)}e^{-s_0t}N_{\rho^{k-\ell}}(C_i)\\
	&= c\sum_{\ell=0}^k\sum_{\gamma\in\mathcal{K}_\ell(j)}r_{\gamma}^{s_0}\Bigl(\frac{e^{-t}}{r_{\gamma}}\Bigr)^{s_0}N_{\rho^{k-\ell}}(C_i)\\
	&\geq c\delta^{s_0}r_{\min}^{qs_0}\sum_{\ell=0}^k\sum_{\gamma\in\mathcal{K}_\ell(j)}r_{\gamma}^{s_0}\rho^{(k-\ell)s_0}N_{\rho^{k-\ell}}(C_i)\\
	&\geq c\delta^{s_0}r_{\min}^{qs_0}(\max_{i}v_iu_i)^{-1}\min_i v_i\cdot\sum_{\ell=0}^k\rho^{(k-\ell)s_0}N_{\rho^{k-\ell}}(C_i)\to\infty
\end{align*}
as $k\to\infty$ by \eqref{eq:neededto2}, which completes the proof of Theorem~\ref{thm:GIFS-renewal-result2}.

\bibliographystyle{abbrv}
\bibliography{Bibliography}

\end{document}